\def \T{\mathbb T}
\def \C {\mathbb C}
\def \Q {\mathbb Q}
\def\ep {\varepsilon}
\def \Z{\mathbb Z}
\def \Zl{\mathbb Z^\ell}
\def \Zk{\mathbb Z^k}
\def \Znn{\mathbb Z^{n-1}}
\def \Z2{\mathbb Z^2}
\def \Z3{\mathbb Z^3}
\def \Za{\mathbb Z^a}
\def \Rk {\mathbb R^k}
\def \Rl {\mathbb R^\ell}
\def \Rnn {\mathbb R^{n-1}}
\def \Rn{\mathbb R^n}
\def \R2{\mathbb R^2}
\def \R3{\mathbb R^3}
\def \Rb {\mathbb R^b}
\def \Tn{\mathbb T^n}
\def \T2{\mathbb T^2}
\def \T3{\mathbb T^3}
\def \C2{\mathbb C^2}
\def \C3{\mathbb C^3}
\def \S1{\mathbb S^1}
\def \S2{\mathbb S^2}
\def \S3{\mathbb S^3}
\def\a{\alpha}
\def\ao{\alpha_0}
\def\g{\gamma}
\def\ve{\varepsilon}
\def\la{\lambda}
\def\x{\xi}
\def\S{\Sigma}
\def\0C{\mathcal C^0}
\def\1C{\mathcal C^1}
\def\2C{\mathcal C^2}
\def\E2P {\exp 2\pi i}
\newcommand{\Id}{\operatorname{Id}}
\newcounter{Lemma}
\newtheorem{lem}{Lemma}[section]
\newtheorem{cor}{Corollary}[section]
\newtheorem{prop}{Proposition}[section]
\newtheorem{thm}{Theorem}[section]
\newtheorem{conj}{Conjecture}
\newtheorem*{theorem*}{Theorem A}
\theoremstyle{definition}
\newtheorem{rem}{Remark}
\newtheorem{defn}{Definition}
\numberwithin{equation}{section}
\begin{document}
\title[The Fried entropy and slow entropy]
{The Fried average entropy and slow entropy for actions of higher rank abelian groups}
\author[A. Katok, S. Katok and F. Rodriguez Hertz]{ Anatole Katok $^*$ }
\author[]{Svetlana Katok}
\author[]{Federico Rodriguez Hertz $^{**}$}

\subjclass[2010]{}

\address{Department of Mathematics\\
        The Pennsylvania State University\\
        University Park, PA 16802 \\
        USA}
\email{katok\_a@math.psu.edu}

\email{katok\_s@math.psu.edu}

\email{hertz@math.psu.edu}
\thanks{ $^*$ Based on research  supported by NSF grants DMS 1002554 and 1304830.}
\thanks{ $^{**}$ Based on research  supported by NSF grant DMS 1201326.}

\begin{abstract} We consider two numerical entropy-type invariants  for actions of $\Zk$, invariant under  a choice of generators and  well-adapted  for  smooth actions whose individual elements have positive entropy. We concentrate on the maximal rank case, i.e. $\Zk,\,k\ge 2$ actions on $k+1$-dimensional manifolds. In this case we show that for a fixed dimension (or, equivalently,  rank)  each of the invariants determines the other and their values are closely related to regulators in algebraic number fields. In particular, in contrast with the classical case of $\mathbb Z$ actions,  the  entropies of ergodic maximal  rank actions take only countably many values.   Our main result is the dichotomy that is best expressed under the assumption of weak mixing or, equivalently, no periodic factors: either both invariants vanish, or their values are bounded away from zero by  universal constants. Furthermore, the lower bounds  grow with dimension:  for the first invariant (the Fried average entropy) exponentially,  and for the second (the slow entropy)  linearly.
\end{abstract}

\maketitle

\section{Introduction}
It is well-known that the standard  notion  of entropy for an action of a locally compact compactly generated  topological group $G$ 
by measure-preserving transformations  assigns  zero value  to any smooth action unless the group $G$ is virtually cyclic, i.e. a compact extension of $\mathbb Z$ (notice that, in particular,  $\mathbb R$ is a compact extension of $\mathbb Z$) .\footnote{Condition of compactly generated (finitely generated in the case of discrete groups) is essential. For example,  {\em any} action of the direct limit of finite cyclic groups allows smooth realization (the first author, unpublished).} The reason is very simple: entropy of a group action measures in some way the exponential  growth of the number of  distinguishable orbit segments  against the  size or the volume  of, say,  a ball in the word-length metric  or some left-invariant metric  in the acting group while for the smooth (or Lipschitz) actions  this number  growth no faster than exponentially against  the radius of that ball. 

There are two  natural ways to obtain numerical invariants  reflecting orbit growth that  are non-trivial for smooth measure-preserving actions:
\begin{enumerate} 
\item  Change  the normalization  and  measure exponential growth against  the radius of the ball. This leads to the slow entropy type invariants as in \cite{KTh}.
\item  Consider an  average exponential  growth, represented by the  ordinary entropy, along one-parameter subgroups, or, more generally,  ``directions'' in the group; for higher-rank abelian groups  this was done by D. Fried  \cite{DF}. 
\end{enumerate}

In this paper we only consider actions of higher rank abelian groups,  more specifically, $\Zk \times\Rn$. In fact, for the general discussion it is sufficient to consider actions of $\Rk$. A standard suspension construction allows to reduce actions of $\Zk$, or, more generally, $\Zk \times\Rn$ to this case. Let us fix a volume element in $\Rk$; in the suspension case a natural volume element comes from $\Zk$.
For $\Rk$ actions the  group of automorphisms  preserving the volume element  and orientation is naturally $SL(k, \mathbb R)$; for $\Zk$ actions the group automorphisms is $SL(k, \mathbb Z)$.  If the orientation is not fixed,  one takes corresponding two-point extensions of those groups.

A   notion of entropy for $\Rk$-actions in order to be  useful in the smooth case should be:
\begin{enumerate}
\item[(i)]  invariant under volume preserving automorphisms of $\Rk$; \footnote{In the suspension case this amounts to invariance under a choice of generators in $\Zk$.}
\item[(ii)] equal to the usual entropy for $k=1$;
\item[(iii)] positive   if entropy of all non-identity elements of the action is positive;
\item[(iv)]  finite for smooth actions on compact manifolds with respect to any Borel probability invariant measure.
\end{enumerate}
 
D. Fried  \cite{DF} suggested a notion of entropy for actions of higher rank abelian groups satisfying these properties, based on the averaging  approach (2). We discuss this notion in section~\ref{SectionFried}. For $\Zk$ actions   one can also define  entropy type invariant for  any rank between $1$ and $k$ based on Fried's approach, see Definition~\ref{k-entropy}.   

A notion based on the slow entropy approach (1) is described in section~\ref{SectionSlow}. While in the Fried definition the invariance property (i) is automatic, in the slow entropy approach 
it is only achieved by minimization over a class of norms in the acting group that of course makes  calculations  more difficult. 

Principal parts of this paper,  sections~\ref{SectionMain} and \ref{sbsSlowCartan}, deal with  a special class of actions, namely actions of $\Znn$  by hyperbolic automorphisms of the torus $\Tn$, $n\ge 3$ (Cartan actions). 

 In section~\ref{SectionMain} we use sophisticated  number-theoretic tools to show that the Fried  average entropy (Definition~\ref{FriedEntropy}) of such actions is bounded  from below by a positive constant, obtain a good below estimate for that constant, and  calculate the minimal value that is conjecturally  achieved   for   a certain $\mathbb Z^3$ action on $\mathbb T^4$; see  the first line of Table 2. Furthermore,  the lower bound grows exponentially with the rank of  a Cartan action, see Theorem~\ref{Fried-entropy-bound}.  We also obtain lower bounds (polynomial and exponential) for  many (but not all) intermediate entropies (Propositions~\ref{l-ent} and \ref{mahler-exp}). An exponential bound for the remaining case is formulated as Conjecture~\ref{conj-exp}.  
  
  The reason for particular attention we pay to Cartan actions is that  they and their minor  modifications provide the only models of $\Znn$ actions with positive Fried average entropy  on $n$-dimensional manifolds for $n\ge 3$. This is proved in \cite{KRH-a} and is quoted  below as Theorem~A.  Thus our estimates of the Fried 
average entropy and intermediate entropies are universal for weakly mixing actions of $\Znn$ on $n$-dimensional manifolds, $n\ge 3$ with positive Fried average entropy, see Corollary~\ref{CorollaryMain}. 

In section~\ref{sbsSlowCartan} we calculate our  automorphism invariant slow entropy for Cartan actions. Our main conclusion (Theorem~\ref{slow-entropy-bounds}) is that  it is uniquely determined by the dimension and the value of the Fried average entropy.  Moreover, it is uniformly bounded away from zero. Again the measure rigidity Theorem~A implies that the same is true for any weakly mixing action of $\Znn$  with positive slow entropy   on an  $n$-dimensional manifold for $n\ge 3$.

Dropping the weak mixing assumption that in our situation is equivalent to the absence of periodic  factors, allows to construct examples with arbitrary small but positive  Fried average entropy  and slow entropy, albeit on manifolds of increasing complexity \cite[Remark 7]{KRH-a}. 
\subsection*{Acknowledgments}The estimates in section~\ref{SectionMain} are based on lower bounds for regulators in number fields.  We would like to thank Peter Sarnak for pointing out the original reference \cite{S} to us, and to John Voight and Eduardo Friedman for help and encouragement of the work on the identification of the number field that minimizes the Fried average entropy.

\section{Entropy function and the Fried average  entropy} \label{SectionFried}  
Let $\a$ be  an  action of $\Rk$ preserving a probability ergodic measure $\mu$. We will only consider actions all of whose elements have finite entropy as in the smooth case. 
\begin{defn}
The  {\em entropy function} for the action $\a$, denoted by $h^\a_\mu$, associates to ${\bf t}=(t_1,\dots t_k)\in \Rk$ the value of measure-theoretic entropy of $\a(\bf t)$, i.e. $h^\a_\mu({\bf t})=h_\mu(\a(\bf t))$. 
\end{defn}
For general actions  the  entropy function is not known to possess any nice properties other than the obvious positive homogeneity of degree one and central symmetry; see \cite[Appendix A(b)]{OW}   for a specific pathological example. However  for smooth actions  this function is convex and   piece-wise linear \cite{HHu}. Lyapunov exponents of the action $\a$ for such  
measure $\mu$ are linear functionals $\chi_j:\Rk\to\mathbb R,1\le j\le n$ , and their
kernels  are called the {\em Lyapunov hyperplanes}. We will routinely ignore $k$ zero  Lyapunov exponents that come from the orbit directions of the action.  Connected components of the complement to the union of the Lyapunov hyperplanes are called {\em Weyl chambers}. For any $C^{1+\epsilon}$ action on a compact manifold, or, more generally, for an action on a smooth manifold which is uniformly $C^{1+\epsilon}$, the entropy function is linear inside each Weyl chamber.\footnote{There are other cases of piecewise linearity that, however, require dynamical assumptions; see e.g. \cite[Theorem 6.16]{BL} where the role of Weyl chambers is played by the cones of expansive directions in a continuous $\Zk$ action.}  In general, maximal cones of linearity 
may comprise several Weyl chambers. If maximal cones of linearity coincide with Weyl chambers we 
say that the  action satisfies {\em full entropy condition} with respect to the  measure $\mu$. 
Full entropy condition is always satisfied if the measure $\mu$ is absolutely continuous. In this case, by the Pesin entropy formula, 
\begin{equation}\label{entropyformula}
 h^\a_\mu({\bf t})=h_\mu(\a({\bf t}))=\sum_{j:\chi_j>0}\chi_j({\bf t}), \text{ for }{\bf t}=(t_1,\dots,t_k)\in\Rk.
 \end{equation}
Since the measure $\mu$ is absolutely continuous, we have $\sum_{j=1}^n \chi_j=0$, therefore
(\ref{entropyformula}) can be re-written as
\begin{equation}\label{ell1}
h^\a_\mu({\bf t})=h_\mu(\a({\bf t}))=\frac12\sum_{j=1}^n|\chi_j({\bf t})|.
\end{equation}

Now we return to the general case of an ergodic Borel probability measure. There are two possibilities:
\medskip

\begin{enumerate}
\item[({\bf P})] {\em all elements of the action, save for the identity, have positive entropy;}  
 \item[({\bf O})] {\em some non-trivial element of the action has zero entropy.} 
 \end{enumerate}\medskip
 
In the first case  the entropy function defines a norm in $\Rk$; the unit ball is a polyhedron whose faces are sections of maximal cones of linearity. In the second case  the entropy function  is a semi-norm. 

\begin{defn}\label{FriedEntropy}{\em The Fried average entropy} of an action $\a$ of $\Rk$  with a fixed volume element, denoted by $h^*(\a)$, is the inverse of the volume of the unit ball $B(h^\a_\mu)$ in the entropy function norm/semi-norm multiplied by the volume of the generalized octahedron in $\Rk$, (the unit ball in the $\ell^1$ norm) $\mathcal O=\{(x_i)\in\Rk | \sum_{i=1}^k|x_i|\leq 1\}$, which is equal to
$\frac{2^k}{k!}$. Thus $$h^*(\a)=\frac{2^k}{k!vol(B(h^\a_\mu))} .$$ 

The Fried average entropy   of a $\Zk$ action $\a$ is defined as the Fried average entropy of its suspension. 

For a mixed group $\Za\times\Rb$ with  $a+b=k$ one takes suspensions for the discrete generators; as before the volume element in $\Rb$ is assumed to be fixed. 
\end{defn}

\begin{rem} Fried was aware of the polyhedral structure of the entropy function for absolutely continuous measures, but not in the general smooth case since the latter relies on the Ledrappier-Young generalization of the Pesin entropy formula \cite{LY} that appeared later than Fried's work, and was explicitly established  even later by Hu \cite{HHu}.
\end{rem}

The normalization is chosen in such a way that the following  properties hold:

\begin{prop}\*
 \begin{enumerate}
\item Let $\a_1$ be an action of  $\mathbb R^{k_1}$ on $M_1$, $\a_2$ be an action of $\mathbb R^{k_2}$ on $M_2$, preserving the measures $\mu_1$ and $\mu_2$, correspondingly, and $\a=\a_1\times\a_2$ be the product action of $\mathbb R^{k_1+k_2}$
 with the measure $\mu=\mu_1\times\mu_2$. Then $h^*(\a)=h^*(\a_1)h^*(\a_2)$.
\item For $k=1,\,\, h^*(\a)$ is equal to the usual entropy of the flow, i.e. the entropy of its time-one map. 
\end{enumerate}
\end{prop}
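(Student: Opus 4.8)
The plan is to verify the two claims directly from Definition~\ref{FriedEntropy}, the key point being to understand how the entropy-function norm and its unit ball behave under products. For part (2), when $k=1$ the acting group is $\mathbb R$ with its fixed volume element (Lebesgue measure), the entropy function is $h^\a_\mu(t)=|t|\,h_\mu(\a(1))$, so its unit ball is the symmetric interval $B=[-1/h_\mu(\a(1)),\,1/h_\mu(\a(1))]$ of length $2/h_\mu(\a(1))$. The generalized octahedron $\mathcal O\subset\mathbb R$ is the interval $[-1,1]$ of length $2=2^1/1!$, so $h^*(\a)=2/\mathrm{vol}(B)=h_\mu(\a(1))$, which is the entropy of the time-one map; this is essentially immediate once one unwinds the normalization.

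For part (1), first I would observe that Lyapunov exponents, hence the entropy function, are additive under products: writing $\mathbb R^{k_1+k_2}=\mathbb R^{k_1}\oplus\mathbb R^{k_2}$ and $\mathbf t=(\mathbf t_1,\mathbf t_2)$, the tangent bundle splits as a product and the Lyapunov exponents of $\a$ are the exponents of $\a_1$ (depending only on $\mathbf t_1$) together with those of $\a_2$ (depending only on $\mathbf t_2$). By the Ledrappier--Young/Hu formula in each factor — or directly by the product formula for entropy $h_{\mu_1\times\mu_2}(\a_1(\mathbf t_1)\times\a_2(\mathbf t_2))=h_{\mu_1}(\a_1(\mathbf t_1))+h_{\mu_2}(\a_2(\mathbf t_2))$ — we get
\begin{equation}\label{eq:prodnorm}
h^\a_\mu(\mathbf t_1,\mathbf t_2)=h^{\a_1}_{\mu_1}(\mathbf t_1)+h^{\a_2}_{\mu_2}(\mathbf t_2).
\end{equation}
Thus the entropy norm of the product is the $\ell^1$-sum of the two norms. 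The next step is the geometric fact that if $N_i$ is a norm (or semi-norm) on $\mathbb R^{k_i}$ with unit ball $B_i$, then the $\ell^1$-sum $N(\mathbf t_1,\mathbf t_2)=N_1(\mathbf t_1)+N_2(\mathbf t_2)$ on $\mathbb R^{k_1+k_2}$ has unit ball $B$ with
$$\mathrm{vol}_{k_1+k_2}(B)=\frac{k_1!\,k_2!}{(k_1+k_2)!}\;\mathrm{vol}_{k_1}(B_1)\,\mathrm{vol}_{k_2}(B_2),$$
since $B=\{(\mathbf t_1,\mathbf t_2): N_1(\mathbf t_1)+N_2(\mathbf t_2)\le 1\}=\bigcup_{s\in[0,1]}\bigl(sB_1\times(1-s)B_2\bigr)$ and integrating $s^{k_1}(1-s)^{k_2}$ over $[0,1]$ gives the Beta-function factor $B(k_1+1,k_2+1)=k_1!\,k_2!/(k_1+k_2+1)!$, after which one more routine computation (or the substitution $\mathrm{vol}(sB_1\times(1-s)B_2)=s^{k_1}(1-s)^{k_2}\mathrm{vol}(B_1)\mathrm{vol}(B_2)$ together with a derivative-in-radius argument) produces the stated constant. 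Combining this with $h^*(\a_i)=2^{k_i}/(k_i!\,\mathrm{vol}(B_i))$ and $h^*(\a)=2^{k_1+k_2}/((k_1+k_2)!\,\mathrm{vol}(B))$ yields $h^*(\a)=h^*(\a_1)h^*(\a_2)$ after the factorials cancel; this is exactly why the octahedral normalization $2^k/k!$ was chosen.

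The only genuine subtlety — the step I expect to be the main obstacle, though it is more a matter of care than of depth — is the semi-norm case, i.e.\ possibility~({\bf O}), where some $B_i$ is unbounded. Then both sides of the product formula are naturally $0$ (an unbounded unit ball has infinite volume, so $h^*=0$), and one must check the conventions make \eqref{eq:prodnorm} and the volume identity degenerate consistently: if $N_1$ is a semi-norm then $N$ is a semi-norm with a kernel containing $\ker N_1\times\{0\}$, $B$ is unbounded, and $h^*(\a)=0=h^*(\a_1)h^*(\a_2)$. I would dispose of this case first with a short remark, then carry out the computation above under the standing assumption that all unit balls are bounded (equivalently, all factors satisfy ({\bf P})), where the Beta-integral argument applies verbatim. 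The entropy product formula itself, which underlies \eqref{eq:prodnorm}, is classical (Abramov--Rokhlin type) and I would simply cite it.
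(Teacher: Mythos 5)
Your argument is correct and follows essentially the same route as the paper: both rest on additivity of entropy under Cartesian products and on the volume identity $(k_1+k_2)!\,\mathrm{vol}(B)=k_1!\,\mathrm{vol}(B_1)\cdot k_2!\,\mathrm{vol}(B_2)$ for the unit ball of the $\ell^1$-sum of two norms, after which the factorials cancel against the normalization $2^k/k!$ (and the $k=1$ computation is identical). The only difference is that you additionally sketch a proof of that volume identity --- the Fubini/Beta-integral computation, correctly using the layer density $k_1u^{k_1-1}$ so as to land on $k_1!\,k_2!/(k_1+k_2)!$ rather than the naive $k_1!\,k_2!/(k_1+k_2+1)!$ --- and you treat the degenerate semi-norm case explicitly; the paper simply quotes the identity and leaves case ({\bf O}) implicit.
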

\begin{proof} (1)   Let $B(h^{\a_1}_{\mu_1})$ and $B(h^{\a_2}_{\mu_2})$ be the unit balls in the entropy norms for $\a_1$ and $\a_2$, respectively, and $B(h_\mu^{\a})$ - the unit ball in the entropy norm for $\a$. We use the additivity of entropy for Cartesian products and  the following property of the volume in $\ell^1$-norm on a product of Euclidean spaces provided with norms:
\[
(k_1+k_2)!vol(B(h^\a_\mu))=k_1!vol(B(h^{\a_1}_{\mu_1}))k_2!vol(B(h^{\a_2}_{\mu_2})).
\]
Then 
\[
\begin{aligned}
h^*(\a)=\frac{2^{(k_1+k_2)}}{vol(B(h^\a_\mu))(k_1+k_2)!}=&\frac{2^{k_1}}{k_1!vol(B(h^{\a_1}_{\mu_1}))}\frac{2^{k_2}}{k_2!vol(B(h^{\a_2}_{\mu_2}))}\\
=&h^*(\a_1)h^*(\a_2).
\end{aligned}
\]
(2) For $k=1$, the volume of the unit ball in the $\ell^1$-norm in $\mathbb R$, $\mathcal O=[-1,1]$
 is equal to $2$. The entropy function is given by $h(t)=th_\mu((\a(1))$. Hence $h_\mu(\a)=h_\mu(\a(1))=\frac{2}{vol(B(h_\a))}=h^*(\a)$.\end{proof}

The following properties of the Fried average entropy follow immediately from the definition: 
\begin{prop}\*\label{Fried-properties}
\begin{enumerate}
 \item The Fried average entropy is equal to zero exactly in case {\rm{(\bf O})}. 
\item The Fried average entropy of a $\Zk$ action 
 is independent of the choice of generators. 
\item\label{entropy-index} The Fried average entropy of the restriction of a $\Zk$ action $\a$ to a finite index subgroup  $A\subset \Zk$ is equal to $h_\mu^\a$ multiplied by the index of  the subgroup $A$. 
\end{enumerate}
\end{prop}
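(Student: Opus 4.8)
For part (1) I would simply unwind the norm/semi-norm dichotomy recorded immediately before Definition~\ref{FriedEntropy}. In case (\textbf{O}) there is a $\bt_0\neq 0$ with $h^\a_\mu(\bt_0)=0$, and positive homogeneity of degree one forces $h^\a_\mu(s\bt_0)=0$ for every $s\in\R$; hence the unit ball $B(h^\a_\mu)$ contains the whole line $\R\bt_0$, so it has infinite $k$-dimensional volume and $h^*(\a)=2^k/\big(k!\,vol(B(h^\a_\mu))\big)=0$. In case (\textbf{P}) the entropy function is a genuine norm on $\Rk$, hence bounded between two positive constants on the compact Euclidean unit sphere, so $B(h^\a_\mu)$ is trapped between two Euclidean balls of positive radius; then $0<vol(B(h^\a_\mu))<\infty$ and $h^*(\a)\in(0,\infty)$. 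Together these give the claimed equivalence.

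For parts (2) and (3) I would handle both at once by tracking what the suspension does under passage to a full-rank subgroup $A\subseteq\Zk$ of index $N=[\Zk:A]$, part (2) being the special case $A=\Zk$ with a new basis. Fix generators of $A$ and let $\Phi\colon\Rk\to\Rk$ be the linear map carrying the standard basis to them, so that $\Phi(\Zk)=A$ and $|\det\Phi|=N$; let $\tilde\a$ be the suspension $\Rk$-action of $\a$ on $S=(\Rk\times M)/\Zk$, with its natural probability measure $\nu$ built from $\mu$ and from the volume element on $\Rk$ attached to $\Zk$. The suspension of $\a|_A$ is obtained from $\tilde\a$ in two steps. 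First, choosing the generators of $A$ reparametrizes time by $\Phi$. Second, $S=(\Rk\times M)/\Zk$ is the quotient of $S_A=(\Rk\times M)/A$ by the free action of the finite group $\Zk/A$, so $S_A\to S$ is an $N$-to-$1$ covering intertwining the translation actions; since the natural measure $\nu_A$ on $S_A$ projects to $\nu$ and the entropy of each individual transformation is unchanged under a finite-to-one measure-preserving extension, this covering does not change the entropy function. Combining the two steps, the entropy function of the suspension of $\a|_A$ is $\bt\mapsto h^{\tilde\a}_\nu(\Phi(\bt))$, its unit ball is $\Phi^{-1}\big(B(h^{\tilde\a}_\nu)\big)$, and
\[
h^*(\a|_A)=\frac{2^k}{k!\,vol\!\left(\Phi^{-1}B(h^{\tilde\a}_\nu)\right)}=|\det\Phi|\cdot\frac{2^k}{k!\,vol(B(h^{\tilde\a}_\nu))}=N\,h^*(\a)=[\Zk:A]\,h^*(\a).
\]
Taking $N=1$, so that $\Phi\in GL(k,\Z)$ and $|\det\Phi|=1$, is part (2); the general statement is part (3).

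The only step that requires real care is the assertion that passage to $A$ leaves the entropy \emph{function} on all of $\Rk$ unchanged, up to the reparametrization $\Phi$. Its values at lattice points are elementary, since for a suspension the time-$\bt$ map at an integer point is, modulo the identity on the $\Tk$ factor, the corresponding element of the underlying action, so $h^{\tilde\a}_\nu(\bt)=h_\mu(\a(\bt))$ for $\bt\in\Zk$ and similarly over $A$; but the volume of the unit ball depends on the whole function, and the clean way to control it in the general measurable setting is the (standard) invariance of entropy under finite extensions, applied to each $\tilde\a(\bt)$. In the uniformly $C^{1+\epsilon}$ case it is even more transparent, since $\a$ and $\a|_A$ share the same Lyapunov functionals up to $\Phi$ and the entropy function is explicit in terms of them. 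I would also note that ergodicity of $\mu$ is irrelevant for this proposition: the entropy function and the Fried average entropy are defined verbatim for any invariant probability measure, and the finite-cover argument uses only that $\nu_A$ projects to $\nu$.
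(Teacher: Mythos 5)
Your argument is correct. The paper offers no proof of this proposition — it states that the properties "follow immediately from the definition" — and your write-up supplies exactly the intended unwinding: in case (\textbf{O}) homogeneity puts a whole line inside $B(h^\a_\mu)$ so the volume is infinite, in case (\textbf{P}) the entropy norm traps $B(h^\a_\mu)$ between two Euclidean balls, and for (2) and (3) the change of variables $\Phi$ with $|\det\Phi|=[\Zk:A]$ together with the observation that the suspension over $A$ is a finite-to-one extension of the suspension over $\Zk$ (hence has the same entropy function up to the reparametrization) yields the factor $[\Zk:A]$, with (2) as the unimodular case.
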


\begin{defn}\label{k-entropy} For $1\le l\le k$ the {\em $l$-entropy} of  a $\Zk$ action $\alpha$, denoted by $h^\ell_\mu$, is the infimum  of Fried average entropies  of ${\mathbb Z}^l$ subactions $\beta$ of $\alpha$:
\[
h^\ell(\a)=\inf_{\beta\subset\a} h^*(\beta).
\]
\end{defn}

In particular, $k$-entropy of a $\Zk$ action is  its Fried average entropy. 

\section{Algebraic actions and rigidity of the Fried average entropy for maximal rank actions.}\label{SectionMain}
\subsection{Maximal rank actions}
We will call an action of $\Za\times\Rb$ with  $a+b=n-1$ on a $n+b$--dimensional manifold a {\em  maximal rank action}. Any  action  whose rank is higher than  that necessarily has zero Fried average entropy since in this case  intersection of all Lyapunov hyperplanes    contains a non-zero element and  hence    suspension of the action   has   an element  all of whose Lyapunov exponents are non-positive and hence has zero entropy.

We will say that  $\a$  is a {\em general position action}  if the   the Lyapunov hyperplanes,  are in general position, i.e. the intersection of any number of those has the minimal possible dimension. 

Notice that if $\a$ is a general position maximal rank $\Zk$ action with $k=n-1$, the number of Weyl chambers (where all Lyapunov exponents have the same signs) is equal to $2^n-2$. Therefore either
\begin{enumerate}
\item there is a Weyl chamber where all signs are negative, or 
\item
all sign combinations excepts for all pluses and all minuses appear in some Weyl chambers. 
\end{enumerate}
In the  case  (1)  the measure has to be atomic \cite{K-DCDS}, hence the entropy function is identically zero. A simple argument shows that the general position assumption can be removed.

\begin{prop}\label{general-position} If $\a$ is not a general position  maximal rank action then its Fried average entropy is equal to zero. 
\end{prop}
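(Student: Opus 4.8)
The plan is to reduce the non–general-position case to the general-position one by a geometric argument about the Weyl-chamber structure, together with the measure-rigidity input already quoted for case (1) above (atomic measure when some Weyl chamber has all-negative signs, \cite{K-DCDS}). Recall that $\a$ is a maximal rank action, so after suspension we have $k=n-1$ Lyapunov functionals $\chi_1,\dots,\chi_n$ on $\Rk$, and, ignoring the $k$ zero exponents, they satisfy a linear dependence (in the absolutely continuous case $\sum_j\chi_j=0$; in general the relevant fact is only that $n$ functionals on an $(n-1)$-dimensional space are linearly dependent). The hypothesis ``not in general position'' means some collection of the Lyapunov hyperplanes $\ker\chi_j$ has intersection of larger-than-minimal dimension.

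First I would dispose of the trivial degeneracies: if some $\chi_j\equiv 0$, or if two functionals $\chi_i,\chi_j$ are proportional, then there is a non-zero ${\bf t}$ lying on $\ker\chi_i\cap\ker\chi_j$ for every pair sharing a kernel, and more to the point, if the hyperplanes fail to be in general position there is a linear subspace $L\subset\Rk$ of dimension $\ge 1$ (in the worst case we must produce dimension $\ge 1$) contained in an ``excess'' intersection $\bigcap_{j\in S}\ker\chi_j$ with $|S|$ larger than $\mathrm{codim}\,L$ would dictate. The key combinatorial step is then: with $n$ functionals on an $(n-1)$-dimensional space whose kernels are NOT in general position, one can choose an open half-line (a ``direction'') on which the signs of $\chi_1,\dots,\chi_n$ realize a pattern that, by the dependence relation among the $\chi_j$, forces some other direction to have all signs $\le 0$. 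Concretely: failure of general position means the chambers do not exhaust all $2^n-2$ admissible sign patterns — some sign vector $\sigma\in\{\pm\}^n$ other than $(+,\dots,+)$ and $(-,\dots,-)$ is missing; I would argue that the missing pattern, combined with the antipodal symmetry $\chi_j(-{\bf t})=-\chi_j({\bf t})$ and the linear relation, implies that either the all-minus pattern is attained on some Weyl chamber, or the boundary of the chamber decomposition contains a ray on which all $\chi_j\le 0$ with at least one $\chi_j=0$. In the first sub-case we invoke \cite{K-DCDS} directly: the existence of a Weyl chamber with all exponents negative forces $\mu$ atomic, hence $h^\a_\mu\equiv 0$ and $h^*(\a)=0$.

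In the remaining sub-case I would show directly that the entropy function is a semi-norm, not a norm — i.e. we are in case (\textbf{O}) of Section~\ref{SectionFried} — and then invoke Proposition~\ref{Fried-properties}(1), which says the Fried average entropy vanishes exactly in case (\textbf{O}). To produce a zero-entropy element it suffices to exhibit a non-zero ${\bf t}_0$ with $\chi_j({\bf t}_0)\le 0$ for all $j$: then $h^\a_\mu({\bf t}_0)=\sum_{j:\chi_j({\bf t}_0)>0}\chi_j({\bf t}_0)=0$ by the entropy formula on the closure of a chamber (Hu's theorem giving piecewise linearity of $h^\a_\mu$ extends the Ledrappier–Young formula to the chamber walls). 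Such a ${\bf t}_0$ exists precisely because the excess intersection of Lyapunov hyperplanes, of dimension strictly larger than the general-position value, must meet the closed cone $\{\chi_j\le 0\ \forall j\}$ in more than the origin: a dimension count shows that when the arrangement is degenerate, the polyhedral cone cut out by $\{\chi_j\le 0\}$ cannot be reduced to $\{0\}$ unless the all-minus chamber is non-empty (the previous sub-case).

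The main obstacle I expect is the combinatorial core: translating ``hyperplanes not in general position'' into ``either an all-negative chamber exists or the cone $\{\chi_j\le0\}$ is non-degenerate,'' uniformly, without extra hypotheses on the number or multiplicities of the $\chi_j$. One must handle repeated or proportional functionals, functionals that vanish identically, and the interaction with the single linear relation $\sum c_j\chi_j=0$ coming from the ambient dimension; I would organize this as a short lemma on arrangements of $n$ hyperplanes through the origin in $\R^{n-1}$, proving that non-general-position forces the complement of their union to omit some chamber, and that an omitted chamber other than $(\pm,\dots,\pm)$ yields, via antipodal symmetry and the relation, a non-trivial solution of $\chi_j({\bf t}_0)\le 0$ for all $j$ (possibly after passing to the all-minus chamber and applying \cite{K-DCDS}). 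Once that lemma is in hand, the conclusion $h^*(\a)=0$ is immediate from Proposition~\ref{Fried-properties}(1). $\square$
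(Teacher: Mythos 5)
Your overall reduction is the right one and matches the paper's: it suffices to produce a nonzero ${\bf t}_0$ with $\chi_j({\bf t}_0)\le 0$ for all $j$, since then $h^\a_\mu({\bf t}_0)=0$ (only the Margulis--Ruelle inequality $h\le\sum_{\chi_j>0}\chi_j$ is needed, not the equality you write), the entropy function fails to be a norm, and $h^*(\a)=0$ by Proposition~\ref{Fried-properties}(1); the detour through \cite{K-DCDS} and atomicity is unnecessary. The genuine gap is that the one step carrying all of the content --- \emph{not in general position implies the closed cone $\{{\bf t}:\chi_j({\bf t})\le 0 \text{ for all } j\}$ is strictly larger than $\{0\}$} --- is never proved. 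You flag it yourself as ``the main obstacle,'' and the justifications offered (``a dimension count shows,'' ``via antipodal symmetry and the relation'') are not arguments. The proposed route through omitted sign patterns is also shaky: for a general ergodic invariant measure there is no relation $\sum_j\chi_j=0$ to exploit, and a generic dependence $\sum_j c_j\chi_j=0$ with coefficients of unknown sign does not convert a missing sign pattern into a vector in the all-nonpositive cone.

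The paper closes exactly this gap with a device you did not find: take a \emph{maximal} collection of $m$ Lyapunov hyperplanes whose intersection $L$ has dimension $>n-1-m$, hence $\dim L\ge n-m$. Maximality of $m$ forces the remaining $n-m$ functionals, restricted to $L$, to be nonzero with kernels in general position inside $L$ (any excess degeneracy there would produce a larger degenerate collection), so they realize all $2^{n-m}$ sign patterns on $L$, in particular the all-negative one; since the chosen $m$ exponents vanish identically on $L$, this yields an open cone in $L$ on which every $\chi_j\le 0$. Alternatively, your target lemma has a direct linear-algebra proof: if $\{{\bf t}:\chi_j({\bf t})\le 0\ \forall j\}=\{0\}$ then $\chi_1,\dots,\chi_n$ positively span the dual of $\Rnn$, which for $n$ vectors in an $(n-1)$-dimensional space forces every $n-1$ of them to be linearly independent, and that is precisely general position of the $n$ kernels. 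Either argument would complete your proof; as written, the proposal is a correct reduction to an unproved lemma.
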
 
\begin{proof} Let $m$ be the  maximal number of Lyapunov hyperplanes whose intersection has dimension greater than minimal, i.e. greater than  $n-1-m$. Consider the restriction of the action to the intersection of those $m$ Lyapunov  hyperplanes that we denote by $L$. Intersections of remaining  $n-m$  hyperplanes with  $L$ are in general position; hence they divide $L$ into $2^{n-m}$ domains where the exponents have all possible combinations of signs. In particular there is a domain $D$ where all  $n-m$ exponents are negative. Since the remaining exponents vanish on  $L$ all action elements from $D$ have zero entropy. 
\end{proof}

So we only consider the case (2). In that case, unless the entropy function is identically zero, the measure $\mu$ is absolutely continuous \cite{KKRH}. 
To summarize, the (deep) results of \cite{KKRH} and the above arguments provide  the following   dichotomy  for ergodic invariant measures for maximal rank  actions  of  rank  $k\ge 2$: 
\medskip

\noindent ${\bf (\frak D)}${\em  Either  the measure is absolutely continuous and the Fried average entropy is positive, or  Fried average entropy of the action is equal to zero. }
\subsection{Cartan actions}
Examples of $\Znn$ actions with positive Fried average entropy (and positive $l$-entropies for all 
$l,\,1\le l\le n-1$) are Cartan actions  by automorphisms or affine maps  of the torus $\mathbb T^{n}$  and its finite factors called the infratori (see e.g.  \cite[Section 2.1.4]{KN}) with respect to Lebesgue measure.

The entropy function for  those  actions is described in terms of  groups of units in the rings of integers in algebraic number fields as follows, see \cite{KKS}. 
Let $A\in SL(n, \mathbb Z)$ be a matrix with an irreducible characteristic polynomial and hence distinct eigenvalues. The centralizer of  $A$ in the ring 
$M(n,\Q)$ of $n\times n$ matrices with rational coefficients can be identified with the ring of all polynomials in
$A$ with rational coefficients modulo the principal ideal
generated by the polynomial $f(A)$, and hence with the field
$K=\Q(\la)$, where
$\la$ is an eigenvalue of $A$. This identification is given  by an injective map
\begin{equation}\label{eq:gamma}
\g: p(A)\mapsto p(\la)
\end{equation}
with $p\in\Q[x]$. Let $\mathcal O_K$ denote the ring of integers in $K$ and $\mathcal U_K$ the group of units in $\mathcal O_K$.
Notice that if $B=p(A)\in M(n,\mathbb Z)$  then $\g(B)\in \mathcal O_K$, and
if $B\in GL(n,\mathbb Z)$ then $\g(B)\in \mathcal U_K$ 
(converse is not
necessarily true). We denote the centralizer of $A$ in   $M(n,\mathbb Z)$ by $C(A)$, and  the centralizer of $A$ in the group $GL(n,\mathbb Z)$ by $Z(A)$. 
It was shown in \cite{KKS} that $\g(C(A))$ is an order in $K$,
\begin{equation}\label{3.6}
\mathbb Z[\la]\subset \g(C(A))\subset \mathcal O_K,
\end{equation}
and
$Z(A)$ is isomorphic to the group of units in $\g(C(A))$.
Therefore, by the Dirichlet Unit Theorem \cite{BS}, 
to $\mathbb Z^{r_1+r_2-1}\times F$,  where $r_1$ is the number of the real
embeddings, $r_2$ is the number of pairs of complex conjugate
embeddings of the field
$K$ into $\mathbb C$, and $F$ is a finite cyclic group. We have $r_1+2r_2=n$, and
for the Cartan action, i.e. when $r_1+r_2-1=n-1$,
we conclude that $r_2=0$, i.e. $K$ is a totally real number field, and $F=\{\pm 1\}$.

Let $A_1=A,A_2,\dots , A_{n-1}\in Z(A)$ correspond to the multiplicatively independent units of $K$, $\ep_1,\dots , \ep_{n-1}$. More precisely, let $v=(v_1,\dots,v_n)$ be an eigenvector of $A$ with eigenvalue $\la$ whose coordinates belong to $K$. By (\ref{eq:gamma}) $\gamma(A)=\la$ is a unit of $K$, call it $\ep_1$. We have $A_i=p_i(A)$, hence $A_iv=p_i(\la)v$,
where $p_i(\la)$ is another unit of $K$, $\ep_i$.
Let $\varphi_1=id, \varphi_2,\cdots,\varphi_n$ be different embedding of $K$ into $\mathbb R$.
Then by taking different embeddings of the relation
$
A_iv=\ep_iv$, we obtain $A_i\varphi_j(v)=\varphi_j(\ep_i)\varphi_j(v)$, i.e. the eigenvalues of $A_i$ are units conjugate to $\ep_i$, and we obtain the following result.

\begin{prop} \label{diag} $A_i$ is conjugate to
\[
\begin{pmatrix} \ep_i & 0 & \cdots 0\\
0 & \varphi_2(\ep_i) & \cdots & 0\\
\cdots & \cdots & \cdots & \cdots\\
0 & 0 & \cdots & \varphi_n(\ep_i)
\end{pmatrix}
\]
over $\mathbb R$.
\end{prop}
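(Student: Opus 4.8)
The plan is to diagonalize $A$ itself over $\R$ by an explicit change of basis coming from the eigenvector $v$ and its Galois conjugates, and then to note that every element of $C(A)$, being a polynomial in $A$, is automatically diagonalized by the same matrix. First I would apply each embedding $\varphi_j$ coordinatewise to the eigenvalue equation $Av=\la v$; this makes sense because the entries of $A$ are rational and hence fixed by $\varphi_j$, and it yields $A\varphi_j(v)=\varphi_j(\la)\varphi_j(v)$. Since $v\neq 0$ has coordinates in $K$ and $\varphi_j$ is injective on $K$, the vector $\varphi_j(v)\in\R^n$ is nonzero, so it is a genuine eigenvector of $A$ for the eigenvalue $\varphi_j(\la)$. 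Because $K$ is totally real (established above) all the $\varphi_j(\la)$ are real, and they are pairwise distinct since the characteristic polynomial of $A$ is irreducible; eigenvectors attached to distinct eigenvalues are linearly independent, so the real matrix $P=\bigl(\varphi_1(v)\,|\,\varphi_2(v)\,|\cdots|\,\varphi_n(v)\bigr)$ is invertible.

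By construction $P^{-1}AP=\diag(\la,\varphi_2(\la),\dots,\varphi_n(\la))$. Hence for any $p\in\Q[x]$ we get $P^{-1}p(A)P=\diag\bigl(p(\la),p(\varphi_2(\la)),\dots,p(\varphi_n(\la))\bigr)=\diag\bigl(p(\la),\varphi_2(p(\la)),\dots,\varphi_n(p(\la))\bigr)$, where the last equality uses that each $\varphi_j$ is a ring homomorphism fixing $\Q$. Taking $p=p_i$ and recalling that $A_i=p_i(A)$ and $p_i(\la)=\ep_i$, this gives $P^{-1}A_iP=\diag(\ep_i,\varphi_2(\ep_i),\dots,\varphi_n(\ep_i))$, which is exactly the claimed conjugacy over $\R$. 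Note that the same $P$ works simultaneously for all $i$, so in fact the whole action is simultaneously diagonalized.

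There is no serious obstacle here: once the structural facts recalled above are in place the argument is a direct computation. The single non-formal input is that $K$ is totally real — this is what allows the diagonalization to be performed over $\R$ rather than merely over $\C$; without it one would only obtain that $A_i$ is conjugate over $\C$ to the displayed (complex) diagonal matrix.
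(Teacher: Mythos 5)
Your proposal is correct and follows essentially the same route as the paper: apply each real embedding $\varphi_j$ to the relation $A_i v=\ep_i v$ (using that $A_i$ has rational entries) to get $A_i\varphi_j(v)=\varphi_j(\ep_i)\varphi_j(v)$, so the $\varphi_j(v)$ form a real eigenbasis diagonalizing all the $A_i$ simultaneously. You merely make explicit a few points the paper leaves implicit (invertibility of $P$, distinctness of the conjugates from irreducibility, and the role of total realness), which is fine.
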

Any Cartan action $\a$ by commuting automorphisms $A_1,\dots, A_{n-1}$ of the torus $\mathbb T^n$ is described by the above construction.

Now we will discuss the $\ell$-entropies of Cartan actions for fixed values of $\ell$, and the Fried average entropy. 

The $1$-entropy is simply the minimal value of the entropy of a non-identity element of the action. It is equal to the logarithm of the Mahler measure of the corresponding unit in the field $K$. By \cite{Sch, HS} for totally real fields the Mahler measure of any unit grows exponentially with the exponent bounded from below by 

\begin{equation}\label{Mahlerreal}c=\frac{1}{2}\log\left(\frac{1+\sqrt 5}{2}\right)\end{equation}  Thus we have the following statement:
\begin{prop} \label{1ent}The $1$-entropy of a Cartan action of a given rank $n-1$ is bounded from below by the linear function $cn$.
\end{prop}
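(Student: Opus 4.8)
The plan is to reduce the $1$-entropy to a purely arithmetic quantity and then feed it into the lower bounds for Mahler measures in totally real fields quoted in \eqref{Mahlerreal}. By Definition~\ref{k-entropy}, together with the fact that for a $\mathbb Z^1$ action the Fried average entropy is, by definition of the suspension, the ordinary entropy of the generating automorphism, one has $h^1(\a)=\inf_A h_\mu(A)$, the infimum taken over all non-identity elements $A$ of the action (non-primitive elements have strictly larger entropy, so it suffices to think of primitive generators). Every such $A$ is of the form $A=\prod_i A_i^{k_i}$ with $\mathbf k=(k_1,\dots,k_{n-1})\neq 0$, and under the identification $\g$ of \eqref{eq:gamma} it corresponds to the unit $\ep=\prod_i\ep_i^{k_i}\in\mathcal U_K$ of the totally real field $K$ of degree $n$; since the $\ep_i$ are multiplicatively independent, $\ep\neq\pm1$, i.e.\ $\ep$ is not a root of unity.

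Next I would compute $h_\mu(A)$ via the Pesin formula. The invariant measure is Lebesgue, hence absolutely continuous, so the full entropy condition holds and \eqref{entropyformula}--\eqref{ell1} apply. Arguing as in Proposition~\ref{diag} (with $\ep$ in place of $\ep_i$), $A$ is conjugate over $\mathbb R$ to $\operatorname{diag}(\varphi_1(\ep),\dots,\varphi_n(\ep))$, so the Lyapunov exponents of $A$ are $\log|\varphi_j(\ep)|$, $1\le j\le n$. Using that $\ep$ is a unit, so $\prod_{j=1}^n|\varphi_j(\ep)|=|N_{K/\Q}(\ep)|=1$, we obtain
\[
h_\mu(A)=\sum_{j:\,|\varphi_j(\ep)|>1}\log|\varphi_j(\ep)|=\tfrac12\sum_{j=1}^n\bigl|\log|\varphi_j(\ep)|\bigr|=\log M(\ep),
\]
the logarithmic Mahler measure of $\ep$. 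If $\ep$ generates $K$, this is the logarithmic Mahler measure of its degree-$n$ minimal polynomial; if $\ep$ lies in a proper subfield $K'\subset K$ with $[K':\Q]=d$, then each $\Q$-conjugate of $\ep$ occurs $n/d$ times among the $\varphi_j(\ep)$ and $h_\mu(A)=\tfrac nd\log M_{K'}(\ep)$, where $M_{K'}(\ep)$ is the Mahler measure of the (degree-$d$) minimal polynomial of $\ep$.

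Finally I would invoke the arithmetic input. The field $K$, and any subfield $K'$ as above, is totally real, so the theorem of Schinzel \cite{Sch} (see also \cite{HS}) applies to the non-torsion unit $\ep$ of degree $d$ and gives $\log M_{K'}(\ep)\ge\tfrac d2\log\frac{1+\sqrt5}{2}=cd$. Combining this with the computation above yields $h_\mu(A)\ge\tfrac nd\cdot cd=cn$ for every non-identity element $A$ of the action; taking the infimum over $A$ gives $h^1(\a)\ge cn$, as claimed. The only substantial ingredient is the Schinzel bound, which is used purely as a black box; within the present argument the sole points requiring care are the reduction to the minimal-entropy element, the identification of that entropy with $\log M(\ep)$ via the Pesin formula, and the bookkeeping of conjugate multiplicities when $\ep$ fails to generate $K$ — and none of these constitutes a genuine obstacle.
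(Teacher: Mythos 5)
Your argument is correct and is essentially the paper's own: identify the $1$-entropy with the minimal entropy of a non-identity element, compute that entropy via the Pesin formula as the logarithmic Mahler measure of the associated unit $\ep\in\mathcal U_K$, and invoke the Schinzel/H\"ohn--Skoruppa bound $\log M(\ep)\ge cd$ for non-torsion units of totally real fields. The only difference is that you spell out the bookkeeping when $\ep$ lies in a proper subfield of degree $d<n$ (where the factor $n/d$ restores the bound $cn$), a point the paper's two-sentence proof leaves implicit.
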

Adding some simple geometric considerations, we obtain a more general result:
\begin{prop}\label{l-ent} The $\ell$-entropy of a Cartan action of  rank $n-1$ is bounded from below by  $\frac{c^\ell n^\ell}{\ell!}$.
\end{prop}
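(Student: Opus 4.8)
The plan is to use Definition~\ref{k-entropy} to reduce the statement to a uniform lower bound on the Fried average entropy of an arbitrary $\Zl$ subaction $\b$ of the Cartan action $\a$, and then to derive that bound by feeding Proposition~\ref{1ent} into Minkowski's convex body theorem.

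First I would fix a $\Zl$ subaction $\b$ of $\a$. Every element of $\a$ has the form $\prod_i A_i^{m_i}$, so, after relabeling, $\b$ is generated by $\ell$ commuting toral automorphisms $B_1,\dots,B_\ell\in Z(A)$ corresponding under $\g$ to multiplicatively independent units $\eta_1,\dots,\eta_\ell$ of the totally real field $K=\Q(\la)$. By Proposition~\ref{diag} and (\ref{ell1}) (applied to the suspension, which preserves Lebesgue measure), the entropy function of $\b$ is $h^\b(\bt)=\tfrac12\sum_{j=1}^n|\chi_j(\bt)|$, where $\chi_j(\bt)=\sum_{i=1}^\ell t_i\log|\vf_j(\eta_i)|$ and $\vf_1,\dots,\vf_n$ are the real embeddings of $K$. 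With the volume element on $\Rl$ normalized so that $\Zl$ has covolume one, Definition~\ref{FriedEntropy} then reads $h^*(\b)=2^\ell/(\ell!\,\operatorname{vol}B)$, with $B=\{\bt\in\Rl:h^\b(\bt)\le1\}$.

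The key estimate is $h^\b(\bt)\ge cn$ for every $\bt\in\Zl\setminus\{0\}$. Indeed $\b(\bt)=\prod_i B_i^{t_i}$ is a toral automorphism whose eigenvalues are the conjugates of the unit $\prod_i\eta_i^{t_i}$, and multiplicative independence of the $\eta_i$ forces $\prod_i\eta_i^{t_i}\ne\pm1$, so $\b(\bt)$ is a non-identity element of the Cartan action $\a$; Proposition~\ref{1ent} therefore gives $h^\b(\bt)=h_\mu(\b(\bt))\ge cn$. Equivalently, the interior of the symmetric convex set $B$ contains no nonzero point of the lattice $\La=\tfrac1{cn}\Zl$. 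Since $\operatorname{vol}(\Rl/\La)=(cn)^{-\ell}$, Minkowski's convex body theorem yields $\operatorname{vol}B\le 2^\ell(cn)^{-\ell}$; in particular $B$ is bounded, so $\b$ automatically belongs to case \textbf{(P)} and needs no separate treatment. Hence $h^*(\b)\ge\frac{2^\ell}{\ell!}\cdot\frac{(cn)^\ell}{2^\ell}=\frac{c^\ell n^\ell}{\ell!}$, and taking the infimum over all $\Zl$ subactions $\b$ completes the proof.

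The argument is short, so the only point needing care is bookkeeping: one should check that the three normalizations involved — the ``natural volume element'' of $\Rl$ coming from $\Zl$ in Definition~\ref{FriedEntropy}, ordinary Lebesgue measure, and the scaled lattice $\tfrac1{cn}\Zl$ used in Minkowski's theorem — are mutually consistent, and that each nonzero point of $\Zl$ does yield a non-identity element of $\a$ to which Proposition~\ref{1ent} applies. The genuinely nontrivial input, the lower bound (\ref{Mahlerreal}) on Mahler measures of units in totally real fields, is already absorbed into Proposition~\ref{1ent}, so no further number theory is required here.
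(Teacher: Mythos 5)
Your proof is correct and follows essentially the same route as the paper's: restrict the entropy norm to the $\Zl$ subaction, use Proposition~\ref{1ent} to show every nonzero lattice point has entropy at least $cn$, and apply Minkowski's convex body theorem to bound $\operatorname{vol}(B(h_\mu^\beta))$ by $2^\ell/(cn)^\ell$, whence $h^*(\beta)\ge c^\ell n^\ell/\ell!$. The only cosmetic difference is that you rescale the lattice to $\tfrac{1}{cn}\Zl$ where the paper dilates the ball to radius $cn$; these are equivalent.
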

\begin{proof} Consider a $\Zl$ subaction $\beta$ of a Cartan action $\a$. We need to estimate the Fried average entropy of $\beta$, $h^*(\beta)$.
The entropy norm $h_\mu^\a$ restricts to $\Rl$ as $h_\mu^\beta$, and by Proposition \ref{1ent} for each element of the lattice ${\bf k}\in\Zl$ $h_\mu^\beta({\bf k})>cn$. Consider a ball in the entropy norm $h_\mu^\beta$ of radius $cn$. Since it contains no points of the lattice $\Zl$ except for the origin, by Minkowski Theorem, its volume is less then $2^\ell$. Thus
\[
c^\ell n^\ell vol(B(h_\mu^\beta))<2^\ell,
\]
hence
\[
vol(B(h_\mu^\beta))<\frac{2^\ell}{c^\ell n^\ell}.
\]
Therefore
\[
h^*(\beta)>\frac{c^\ell n^\ell}{\ell!}. 
\]
\end{proof}
Using Stirling formula we see that Proposition~\ref{l-ent} implies exponential below estimate for the $\ell$-entropies for  $\ell$ growing proportionally to the rank with a sufficiently small coefficient of proportionality. 

\begin{prop}\label{mahler-exp}  Given $\delta >0$, if $\delta< \frac{\ell}{n}< ec=\frac{e}{2}\log(\frac{1+\sqrt 5}{2})$, the $\ell$-entropy  of a Cartan action  of rank $n-1$ grows exponentially with $n$; the exponent is bounded below by  the positive number $\frac{\ell}{n}\log c-\frac{\ell}{n}\log\frac{\ell}{n}+\frac{\ell}{n}$.
\end{prop}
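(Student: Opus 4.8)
The plan is to feed the estimate of Proposition~\ref{l-ent} into Stirling's formula and repackage the result as an exponential in $n$. By Proposition~\ref{l-ent} we have $h^\ell(\a)\ge \frac{c^\ell n^\ell}{\ell!}$; combining this with the elementary bound $\ell!\le e\,\ell^{\ell+1/2}e^{-\ell}$, valid for every $\ell\ge 1$, gives
\[
h^\ell(\a)\;\ge\;\frac{c^\ell n^\ell}{e\,\ell^{\,\ell+1/2}\,e^{-\ell}}\;=\;\frac{1}{e\sqrt{\ell}}\left(\frac{e\,c\,n}{\ell}\right)^{\!\ell}.
\]
Writing $t=\ell/n$ and using $\log\ell=\log t+\log n$, one checks that $\ell\log(ecn/\ell)=tn(1+\log c-\log t)=n(\tfrac{\ell}{n}\log c-\tfrac{\ell}{n}\log\tfrac{\ell}{n}+\tfrac{\ell}{n})$, whence
\[
h^\ell(\a)\;\ge\;\frac{1}{e\sqrt{\ell}}\,\exp\!\Big(n\big(\tfrac{\ell}{n}\log c-\tfrac{\ell}{n}\log\tfrac{\ell}{n}+\tfrac{\ell}{n}\big)\Big).
\]

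It then remains to check that the coefficient of $n$ in the exponent, $E(t):=t\big(1+\log(c/t)\big)$ with $t=\ell/n$, is positive. Since $1+\log(c/t)>0$ exactly when $t<ec$, the hypothesis $\ell/n<ec$ yields $E(\ell/n)>0$, and this is precisely the advertised exponent. The other hypothesis $\ell/n>\delta$ is used only to ensure that $\ell$ grows linearly with $n$: then $\tfrac{1}{e\sqrt{\ell}}\ge\tfrac{1}{e\sqrt{n}}$ is only subexponentially small in $n$, so the factor $\exp(nE(\ell/n))$ governs the growth and the bound is genuinely exponential rather than merely polynomial.

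There is no substantive obstacle here: the entire content is carried by Proposition~\ref{l-ent}, and the only step requiring care is the correct identification of the exponent together with the determination of the range $t<ec$ in which it is strictly positive. One may, if desired, also note that $E$ is maximized at $t=c$ with $E(c)=c$, so the exponential rate given by this method is largest when $\ell\approx cn$.
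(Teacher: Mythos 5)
Your argument is correct and is precisely the paper's (the paper merely states that Stirling's formula applied to the bound $h^\ell(\a)\ge c^\ell n^\ell/\ell!$ of Proposition~\ref{l-ent} yields the exponential estimate, and you have carried out exactly that computation, correctly identifying the exponent $\tfrac{\ell}{n}\log c-\tfrac{\ell}{n}\log\tfrac{\ell}{n}+\tfrac{\ell}{n}$ and the range $\ell/n<ec$ where it is positive). The only quibble is your gloss on the hypothesis $\ell/n>\delta$: since $\ell\le n$ the prefactor $1/(e\sqrt{\ell})$ is subexponential in $n$ regardless, and the real point of bounding $\ell/n$ away from $0$ is to keep the exponent $E(\ell/n)$ itself from degenerating to zero.
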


The estimate for the Mahler measure \eqref{Mahlerreal} is not sufficient for  estimating the Fried average entropy for Cartan actions. This estimate requires more sophisticated number-theoretic tools.

 Consider the entropy norm in $\Rnn$  associated  with a Cartan action $\a$ given by commuting matrices $A_i, 1\leq i\leq n-1$ with real eigenvalues  $\lambda_j(A_i), 1\leq j\leq n$. We define $n$
Lyapunov hyperplanes on $\Rnn$ by $\chi_j({\bf t})=0,\,\,j=1,\dots,n$, where
\begin{equation}\label{Lyap}
\chi_j({\bf t})=\chi_j(t_1,\dots t_{n-1})=\sum_{i=1}^{n-1} t_i\log|\lambda_j(A_i)|
\end{equation}
for $1\leq j\leq n$. In this case all hyperplanes are distinct. Since 
\begin{equation}\label{sumcolumnzero}
\sum_{j=1}^{n}\log|\lambda_j(A_i)|=0,
\end{equation}
 we have
\begin{equation}\label{sumzero}
\chi_1+\chi_2+\cdots \chi_{n}=0,
\end{equation}
hence inside each Weyl chamber the signs of $\chi_j$'s cannot be all the same, and the Lyapunov hyperplanes  divide $\Rnn$ into 
$2^{n}-2$ Weyl chambers, in which the entropy function $h^\a_\mu$ is  linear  Hence
the entropy function is a norm (see Figure 1 for $n=3$).
\begin{figure}[hbt]
 \psfrag{A}[l]{\small $ \chi_3=0$}
  \psfrag{B}[l]{\small $\chi_2=0$}
   \psfrag{C}[l]{\small $\chi_1=0$}
   \psfrag{D}[l]{\small $\chi_3=1$}
\psfrag{E}[l]{\small $\chi_3=-1$}
    \psfrag{C1}[l]{\small $\mathcal C_1$}
     \psfrag{C2}[l]{\small $\mathcal C_2$}
      \psfrag{C3}[l]{\small $\mathcal C_3$}
  \includegraphics[scale=0.5]{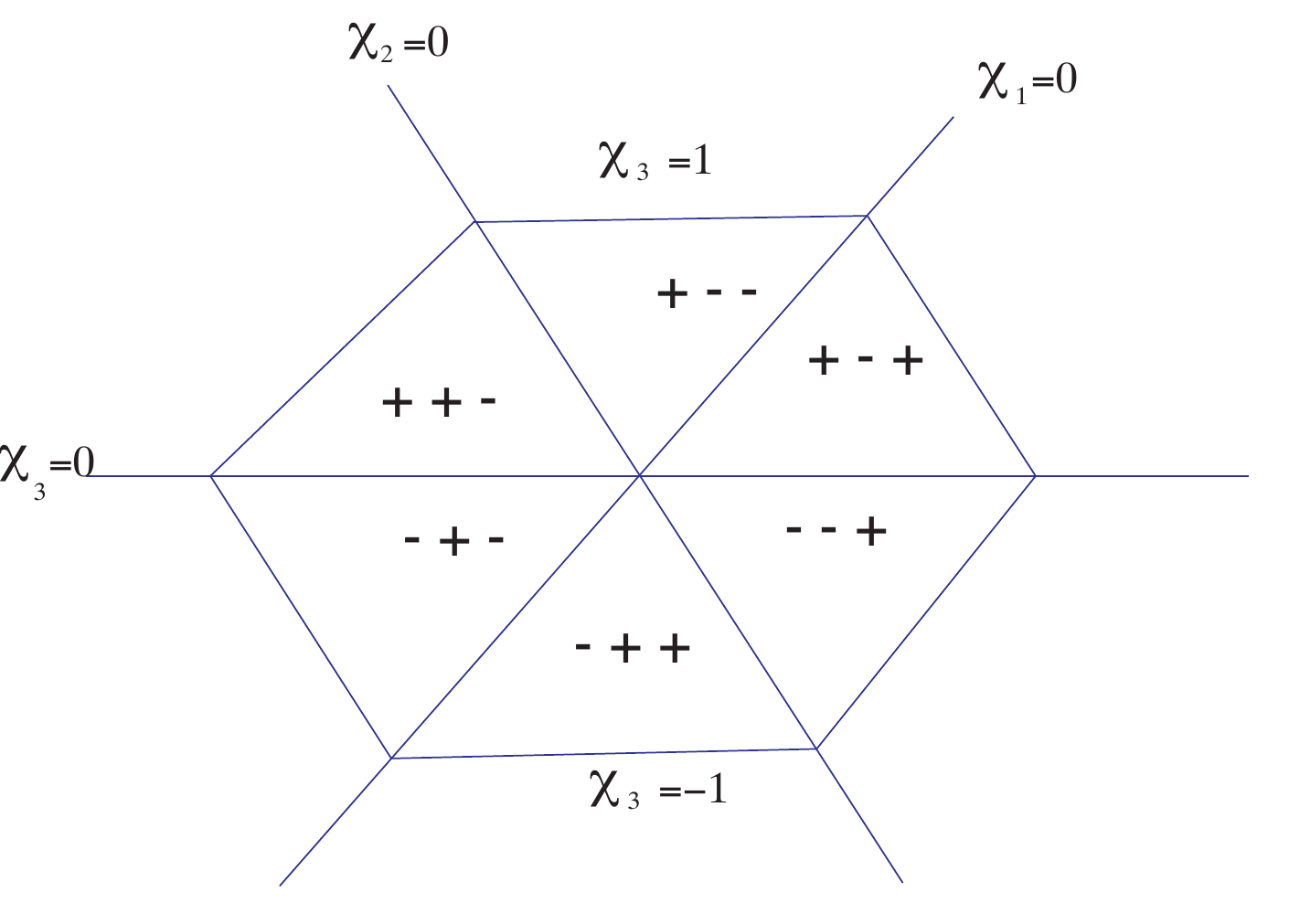}  
  \caption{Weyl chambers and the unit ball in the entropy function norm for $n=3$}
   \end{figure}
   
In order to calculate the volume of $B(h^\alpha_\mu)$, we make change of variables $\varphi:\Rn\to\Rn$
\[
y_j=\chi_j(t_1,\dots t_{n-1})-t_n,\,\,1\leq j\leq n
\]
which transforms the hyperplane $t_n=0$ (corresponding to the original $\Rnn$) into the hyperplane 
\begin{equation}\label{sum0}
\sum_{j=1}^n y_j=0,
\end{equation}
and the perpendicular  line into the perpendicular line: $\varphi(0,\dots,0,1)=(-1,\dots,-1)$.
By (\ref{ell1}), the unit ball in the entropy norm $B(h^\a_\mu)$ is mapped into the intersection of the ball in $\Rn$ of $\ell^1$-norm of radius $2$ with the hyperplane (\ref{sum0}) which we denote by $H$.

 The Jacobian matrix of $\varphi$ is
 \[
 D=\begin{pmatrix}\log|\lambda_1(A_1)|&\log|\lambda_1(A_2)|&\cdots &\log|\lambda_1(A_{n-1})|& -1\\
 \log|\lambda_2(A_1)|&\log|\lambda_2(A_2)|&\cdots &\log|\lambda_2(A_{n-1})|& -1\\
 \cdots & \cdots &\cdots &\cdots\\
 \log|\lambda_n(A_1)|&\log|\lambda_n(A_2)|&\cdots &\log|\lambda_n(A_{n-1})& -1
 \end{pmatrix}.
 \]  
 
 By Proposition \ref{diag} we have $\la_i(A_j)=\varphi_i(\ep_j)$, so if $\gamma(Z(A))=U_K$, the matrix $D$ with the last column removed is the transpose of the matrix that appears in the definition of the {\em regulator}  $R_K$  of the number field $K$ \cite[Chapter 2, Section 4]{BS}. 

Notice that  
the determinants of all $(n-1)\times(n-1)$ sub-matrices of $D$ obtained by deleting the last column and any row are equal up to the sign. By (\ref{sumcolumnzero})
the rows of $D$ with the last column removed are linearly dependent and sum to zero. This allows us to change the matrix $D$ with the $i$-th row removed into the matrix $D$ with the $j$-th row removed through a series of row operations (with $1 \le i, j \le n$). Since row operations affect only the sign of the determinant of a matrix, we conclude that the absolute value of the Jacobian $\frac{\partial (y_1,\dots,y_n)}{\partial (t_1,\dots,t_n)}$ is equal to $nR$, where $R$ is the absolute value of
the determinant of any such  $(n-1)\times(n-1)$ sub-matrix, hence $dy_1\wedge\dots\wedge dy_n=nRdt_1\wedge\dots\wedge dt_n$.
We write $dt_1\wedge\dots\wedge dt_n=ds\wedge dt_n$, where $ds$ is the $(n-1)$-dimensional volume in the hyperplane $t_n=0$, and similarly, $dy_1\wedge\dots\wedge y_n=ds'\wedge dt$, where $ds'$ is the $(n-1)$-dimensional volume in the hyperplane $H$, and $dt$ is in the perpendicular direction. 
Taking into account that the unit vector $(0,\dots,0,1)$ is mapped to the vector $(-1,\dots,-1)$ of length $\sqrt{n}$, i.e. that $dt=\sqrt{n}dt_n$,
we conclude that $ds'=\sqrt{n}Rds$, and
for the $(n-1)$-dimensional volumes we have
\[
vol(H)=\int_{H}ds'=\sqrt{n}R\int_{B(h^\a_\mu)}ds=\sqrt{n}Rvol(B(h^\a_\mu)).
\]
Using the formula for the volume of the intersection of a unit ball in the $\ell^1$-norm in $\Rn$ by a hyperplane  passing through the  origin, see  \cite[Prop. II.7]{MP}, for the ball of radius $2$ we have:
\[
vol(H)=\sqrt{n}\binom{2n-2}{n-1}\frac1{(n-1)!}.
\]
Therefore $vol(B(h^\a_\mu))=\binom{2n-2}{n-1}\frac{1}{R(n-1)!}$, and
\begin{equation}\label{Fried-regulator}
h^*(\a)=\frac{2^{n-1}}{(n-1)!vol(B(h^\a_\mu))}=\frac{R2^{n-1}}{\binom{2n-2}{n-1}},
\end{equation}
where  
  \begin{equation}\label{regulator-index}
 R=kR_K
 \end{equation}
 and $k=[U_K:\gamma(\a)]\ge1$.
It thus follows  that the lower bounds for the Fried average entropy will be found when $\gamma(\a)=U_K$.

\begin{thm} \label{Fried-entropy-bound} The Fried average entropy $h^*(\a)$
 of  a Cartan action of a given rank $n-1\ge 2$ is bounded away from zero 
 by a positive function  that grows exponentially with $n$,  
 \begin{equation}\label{friedexp}
 h^*(\a)>0.000752\exp(0.244n),\end{equation}
 and, furthermore, 
 \begin{equation}\label{friedabs}h^*(\a)\geq 0.089.\end{equation}
  \end{thm}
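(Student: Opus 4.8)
The plan is to reduce the theorem to a lower bound on the regulator $R_K$ of a totally real number field $K$ of degree $n$, using the formula \eqref{Fried-regulator}. By \eqref{regulator-index} we have $R = kR_K \ge R_K$, so it suffices to bound $h^*(\a)$ below assuming $\gamma(\a)=U_K$, i.e. $R=R_K$. Thus the first step is purely a matter of invoking the classical lower bounds for regulators of totally real fields: the sharpest available results (this is where the reference \cite{S}, pointed out by Sarnak, enters, together with subsequent refinements) give an inequality of the form $R_K \ge C(n)$ where $C(n)$ grows at a definite rate with the degree $n$. I would quote such a bound in the form $R_K > a \cdot b^{\,n}$ for explicit constants $a>0$, $b>1$ valid for all $n\ge 3$ (with the understanding that for small $n$ the inequality may need to be checked by hand against tables of totally real fields of minimal regulator).

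The second step is to feed this into \eqref{Fried-regulator}. We have
\[
h^*(\a) = \frac{R\,2^{n-1}}{\binom{2n-2}{n-1}} \ge \frac{R_K\,2^{n-1}}{\binom{2n-2}{n-1}}.
\]
The central binomial coefficient satisfies $\binom{2n-2}{n-1} \le \frac{4^{n-1}}{\sqrt{\pi(n-1)}}\cdot(1+o(1))$ by Stirling, so $\dfrac{2^{n-1}}{\binom{2n-2}{n-1}} \ge \dfrac{\sqrt{\pi(n-1)}}{2^{n-1}}\cdot(1-o(1))$, up to the usual explicit error control in Stirling's formula (Robbins' inequalities suffice to make everything rigorous). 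Combining with $R_K > a\,b^n$ gives
\[
h^*(\a) > a\,\sqrt{\pi(n-1)}\,\left(\frac{b}{2}\right)^{\!n}\cdot(\text{explicit constant}).
\]
For this to be exponentially growing one needs $b>2$, and the known regulator bounds do supply a base exceeding $2$ (the exponent $0.244$ in \eqref{friedexp} corresponds to $\log(b/2)\approx 0.244$, i.e. $b/2 \approx e^{0.244}$). One then tracks the constants carefully to extract the numerical value $0.000752$ in \eqref{friedexp}.

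For the absolute bound \eqref{friedabs}, the exponential estimate \eqref{friedexp} handles all $n$ large enough that $0.000752\exp(0.244n)\ge 0.089$, which happens once $n$ exceeds some explicit threshold $n_0$ (roughly $n \ge 20$); for the finitely many remaining degrees $3 \le n \le n_0$ one uses the exact or numerically computed minimal regulators of totally real fields of each such degree, available in the literature and tables (Voight, Friedman), to check \eqref{Fried-regulator} directly and confirm $h^*(\a)\ge 0.089$, with the minimum conjecturally attained for the cubic field giving the first line of Table 2. I expect the main obstacle to be not any single step but the bookkeeping: one must choose the regulator lower bound in a form whose constants, after being pushed through Stirling's estimate for $\binom{2n-2}{n-1}$, still yield a genuinely exponential bound with a clean leading constant, and one must verify the small-degree cases against number-field tables rather than asymptotics. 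Ensuring the constant $0.244$ in the exponent is actually achieved — rather than a slightly smaller value — requires the strongest form of the regulator inequality, which is the delicate analytic input borrowed from \cite{S}.
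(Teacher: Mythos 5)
Your reduction to a regulator bound and your treatment of \eqref{friedexp} essentially match the paper's: the analytic input is Zimmert's lower bound $R_K\geq a(s)\exp(b(s)n)$ from \cite{Z} (not Slavutskii's estimate \cite{S}, which is only mentioned in the acknowledgments), specialized to $s=0.35$ to get $R_K>0.000376\exp(0.9371n)$; since $e^{0.9371}>2$, even the crude estimate $\binom{2n-2}{n-1}\le 4^{n-1}$ yields $h^*(\a)>0.000752\exp(0.244n)$, so your Stirling refinement is harmless but unnecessary. (A small slip: the conjectural minimizer in the first line of Table 2 is the \emph{quartic} field of discriminant $725$, not a cubic one.)

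The genuine gap is in your argument for \eqref{friedabs}. Your threshold is $n_0\approx 20$, so you must handle every degree $3\le n\le 19$ by ``checking tables of minimal regulators.'' But certifying the minimal regulator of totally real fields of degree $n$ requires knowing the tables are \emph{complete} in the relevant range, which in turn requires two things: Friedman's inequality $R_K>2g(1/D_K)$ to show that a field violating the desired bound would have discriminant in an explicitly bounded interval, and unconditional discriminant bounds from geometry of numbers \cite{PZ} to control that interval. As the paper points out, the latter are available only for $n\le 7$; for $8\le n\le 19$ no such completeness certificate exists, and your plan has no mechanism to close this range. The paper's device here is purely analytic: it keeps $s$ as a free parameter and, for each $n$ beyond $7$, takes the best value of $s$ in Zimmert's bound; the resulting quantity $\min_n\max_s Z(n,s)=0.089$ is precisely where the constant in \eqref{friedabs} comes from (the low-degree table check, justified via Friedman's bound, gives the stronger value $h_{\min}=0.330027$ for $3\le n\le 7$ and is also used to reduce the problematic range to $n\le 16$). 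Without either this parameter optimization or a substitute for the missing tables in degrees $8$ through $19$, your proof of \eqref{friedabs} does not close.
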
 
  \begin{proof}

We use Zimmert's analytic lower bound for regulators \cite{Z}. It states that for a totally real number field $[K:\Q]=n$, and any $s>0$
\begin{equation}\label{Zim}
R_K\geq a(s)\exp(b(s)n),
\end{equation}
where 
\[a(s)=(1+s)(1+2s)\exp\left(\frac2{s}+\frac1{1+s}\right)
\]
 and 
\[
b(s)=\log\left(\frac{\Gamma(1+s)}{2}\right)-(1+s)\frac{\Gamma'}{\Gamma}\left(\frac{1+s}{2}\right).
\]
Due to \eqref{Fried-regulator}, in order to obtain an exponential lower bound for the Fried average entropy we need  $b(s)>\log 2$, and for $s=0.35$ we obtain
\begin{equation}\label{Z035}
 R_K>0.000376 \exp(0.9371n). 
 \end{equation}

Using the estimate for the middle binomial coefficient
\begin{equation}\label{rough}
\frac{4^{n-1}}{n}\le \binom{2n-2}{n-1}\le 4^{n-1}
\end{equation}
we obtain
\[
\frac1{2^{n-1}}\le \frac{2^{n-1}}{\binom{2n-2}{n-1}}\le \frac{n}{2^{n-1}}.
\]

Using the bound (\ref{Z035}), we obtain
\begin{equation}\label{better}
h^*(\a)=\frac{R_K2^{n-1}}{\binom{2n-2}{n-1}}>\frac{0.000376\exp(0.9371n)}{2^{n-1}}>0.000752\exp(0.244n),
\end{equation}
thus proving the  first assertion of Theorem~\ref{Fried-entropy-bound}, \eqref{friedexp}. The
minimum of this lower bound (achieved for $n=3$) is equal to 
$0.001565\dots$ that is weaker than  the claimed lower bound \eqref{friedabs}.

Inspection of the number fields data at http://www.lmfdb.org/ identifies the quartic totally real number field of discriminant $725$ as
the field that minimizes $h^*(\a)$ among the fields covered by the tables. 
For it $h^*(\a)=0.330027...$,  and we conjecture  that this gives the value $h_{\text{min}}$ of   $h^*(\alpha)$  for all Cartan actions $\alpha$. 

Although we are not able to prove this statement (see Conjecture \ref{minimizing}), we can improve the lower bound to \eqref{friedabs}. Along the way we explain what number-theoretic tools are available for the actual proof that for any Cartan action $\a$ (i.e. for any totally real number field $K$) 
\[
h^*(\a)\geq h_{min},
\]
and what the limitations presently are.
\begin{lem} The Fried average entropy  $h^*(\a)$ of a
 Cartan action of  rank $n-1$ for $3\leq n\leq 7$ satisfies $h^*(\a)\geq h_{min}=0.330027...$.
\end{lem}
\begin{proof} 

Suppose  $K$ is a totally real number field of degree $n$ for which
\begin{equation}\label{min}
h^*(\a)<h_{min}.
\end{equation}
Then 
\[
\frac{0.000376\exp(0.9371n)2^{n-1}}{\binom{2n-2}{n-1}}<0.33002,
\]
and by a direct numerical calculation, we find that $n\leq 16$. 

Now we use Friedman's lower bound for the regulator of a totally real number field $[K:\Q]=n$ \cite{F},
\[
R_K>2g(1/D_K),
\]
where
\[
g(x):=\frac1{2^n4\pi i}\int_{2-i\infty}^{2+i\infty}(\pi^nx)^{-s/2}(2s-1)\Gamma(\frac{s}{2})^nds.
\]
It is known that $g(x)$ tends to $-\infty$ as $x\to 0+$, and that $g(x)$ is positive and vanishes exponentially fast for large $x$.
If  (\ref{min}) holds, then $R_K<\frac{0.33002\binom{2n-2}{n-1}}{2^{n-1}}$ which implies that for some $c_1(n)<c_2(n)$ 
(computable numerically) 
$D_K<c_1(n)$ or $D_K>c_2(n)$. In order to use the upper bound $c_1(n)$ we need absolute upper bounds for $D_K$ obtained by geometry of numbers \cite{PZ} that are $<c_2(n)$. For $n\leq 7$
such bounds are known, and the proof is completed by a finite check as follows. For $n=7$ the Friedman's upper bound for $D_K$ is smaller than the minimal discriminants of the totally real fields of this degree \cite{V} - so there are no totally real fields in degree $7$. 
For  degrees 3, 4, 5, 6 the Friedman's upper bounds for $D_K$ are $115, 2250, 40400$, and $710000$, respectively. The corresponding $19$ totally real number fields found in http://www.lmfdb.org/ ($2$ of degree $3$, $7$ of degree $4$, $4$ of degree $5$ and $6$ of degree $6$) are given in Tables 1-4. They show that the minimum of the Fried average entropy is achieved on the quartic field $K=\Q(\ve)$ with discriminant $725$ and the defining polynomial $x^4-x^3-3x^2+x+1$, where $\ve$ is a fundamental unit. 
Using Pari-GP we find that for this field 
the index $[\mathcal O_K:\mathbb Z[\ve]]=1$, and by (\ref{3.6}) we conclude that $\gamma(C(A))=\mathcal O_K$, and hence $\gamma(Z(A))=\mathcal U_K\cap\gamma(C(A))=\mathcal U_K$.

\begin{table}[h]
\caption{Totally real number fields of degree $3$ with $D_K<115$}
\begin{tabular}{ c | c | c | c }
\hline
$D_K$ & $f$ & $R_K$ & $h^*(\a)$\\
\hline
$49$ & $x^3-x^2-x +1$& $0.525454$ & $0.350303$ \\
$81$ & $x^3-3x-1$& $0.849287$& $0.566191$\\
\hline
\end{tabular}
\end{table}

\begin{table}[h]
\caption{Totally real number fields of degree $4$ with $D_K<2250$}
\begin{tabular}{ c | c | c | c }
\hline
$D_K$ & $f$ & $R_K$ & $h^*(\a)$\\
\hline
$725$ & $x^4-x^3 -3x^2 +x+1$& $0.825068$ & $0.330027$ \\
$1125$ & $x^4-x^3 -4x^2 +4x+1$& $1.165455$& $0.466182$\\
$1600$&$x^4-6x^2+4$&$1.542505$&$0.617002$\\
$1957$&$x^4-4x^2-x+1$&$1.918363$&$0.767345$\\
$2000$&$x^4-5x^2 +5$&$1.852810$&$0.741124$\\
$2048$&$x^4-4x^2 +2$&$2.441795$&$0.976718$\\
$2225$&$x^4-x^3-5x^2+2x +4$&$2.064511$&$0.825804$\\
\hline
\end{tabular}
\end{table}

\begin{table}[h]
\caption{Totally real number fields of degree $5$ with $D_K<40400$}
\begin{tabular}{ c | c | c | c }
\hline
$D_K$ & $f$ & $R_K$ & $h^*(\a)$\\
\hline
$14641$ & $x^5-x^4-4x^3+3x^2+3x-$& $1.635694$ & $0.373873$ \\
$24217$&$x^5-5x^3-x^2+3x+1$&$2.399421$&$0.548439$\\
$36497$&$x^5-x^4-3x^3+ 5x^2+ x-1$&$3.550657$&$0.811579$\\
$38569$&$x^5-5x^3 +4x-1$&$3.155437$&$0.721243$\\
\hline
\end{tabular}
\end{table}

\begin{table}[h]
\caption{Totally real number fields of degree $6$ with $D_K<710000$}
\begin{tabular}{ c | c | c | c }
\hline
$D_K$ & $f$ & $R_K$ & $h^*(\a)$\\
\hline
$300125$&$x^6-x^5-7x^4 +2x^3+7x^2-2x-1$&$3.277562$&$0.416198$\\
$371293$&$x^6 -x^5-x^4 +4x^3+6x^2-3x-1$&$3.774500$&$0.479302$\\
$434581$&$x^6-2x^5-4x^4+5x^3+4x^2 -2x-1$&$4.187943$&$0.531802$\\
$453789$&$x^6-x^5-6x^4+ 6x^3+8x^2-8x+ 1$&$4.399962$&$0.558725$\\
$592661$&$x^6-x^5-5x^4+ 4x^3+5x^2-2x-1$&$4.525483$&$0.574665$\\
$703493$&$x^6- 2x^5- 5x^4+11x^3+2x^2-9x+1$&$5.233524$&$0.664574$\\
\hline
\end{tabular}
\end{table}

\end{proof}

\begin{figure}[thb]
\vspace*{-3.2cm}
  \includegraphics[scale=0.37]{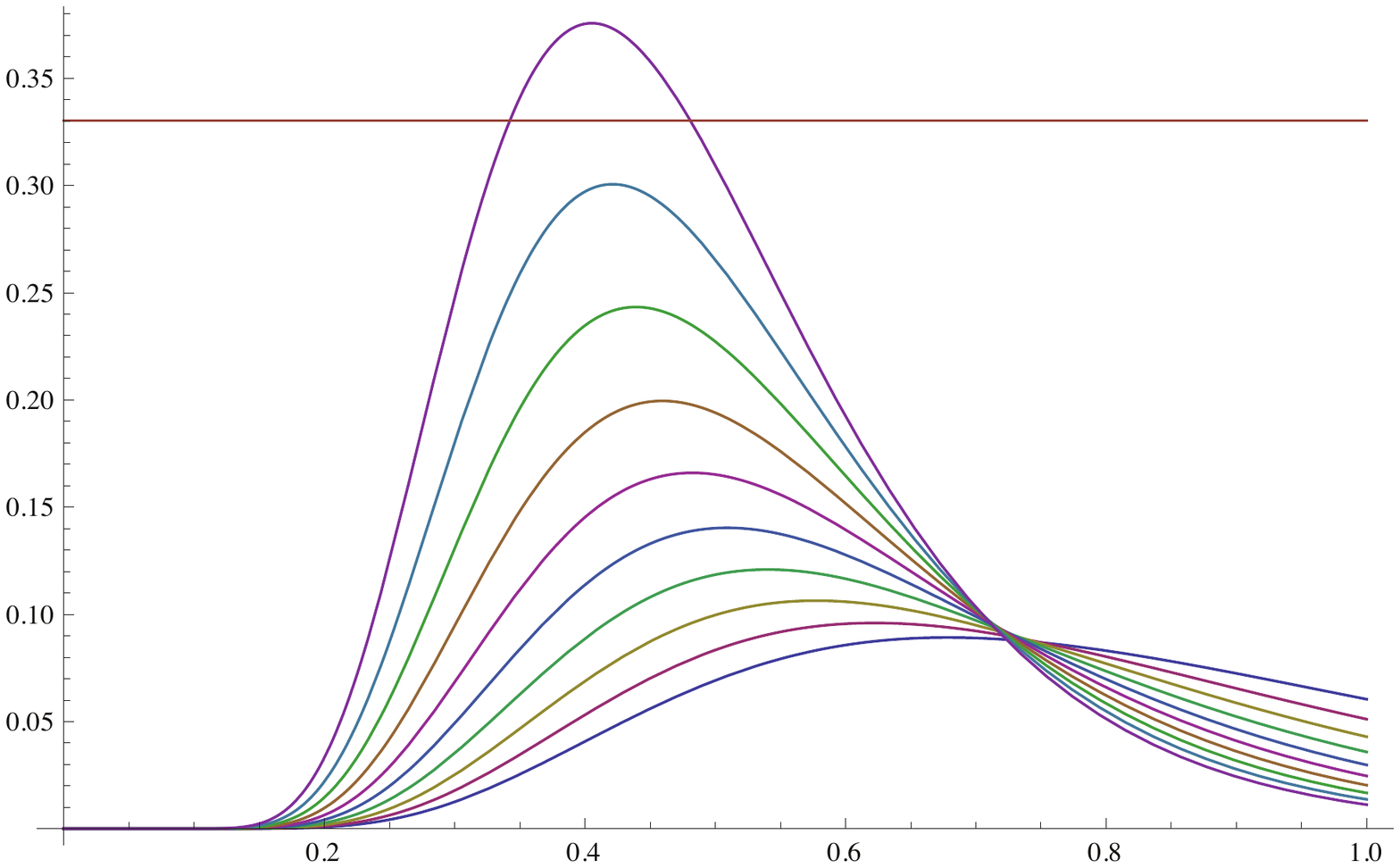}
\vspace*{-3.4cm}
   \caption{Plots of $Z(n,s)$ for $n=8,\dots,16,17$}
   \end{figure} 
Unfortunately, for $n>7$ such bounds are not known, but for $8\leq n\leq 16$   
we can improve this minimum of the lower bound (\ref{better}) by varying the parameter $s$ in the Zimmert analytical formula  (\ref{Zim}). 

Let $Z(n,s)=\frac{a(s)\exp(b(s))2^{n-1}}{\binom{2n-2}{n-1}}$ be the lower bound for $h^*(\a)$ obtained from the lower bound for the regulator (\ref{Zim}). 
Using numerical calculation, we obtain
\[
h^*(\a)\geq \min_n(\max_s Z(n,s))=0.089.
\]

The functions $Z(n,s)$ for $n=8,\dots,16,17$ are shown in Figure 2 (obtained by Mathematica). It is also evident from these plots that 
$n\leq 16$ is the best bound that can be obtained from Zimmert's analytic formula. 
\end{proof}

\begin{conj}\label{minimizing} The Cartan action $\a$ corresponding to the  quartic totally real number field of discriminant $725$ and the defining polynomial $x^4-x^3-3x^2+x+1$ minimizes the Fried average entropy $h^*(\a)$ among all Cartan actions $\a$. For that action $h^*(\a)=0.330027...=h_{min}$.
\end{conj}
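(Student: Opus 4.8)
The plan is to establish $h^*(\a)\ge h_{min}$ for every Cartan action, with equality only for the quartic field of discriminant $725$. By \eqref{Fried-regulator} and \eqref{regulator-index}, since the index $k=[U_K:\gamma(\a)]\ge 1$ only increases $h^*(\a)$, this is equivalent to showing $\frac{R_K2^{n-1}}{\binom{2n-2}{n-1}}\ge h_{min}$ over all totally real number fields $K$ with $[K:\Q]=n\ge 3$, strictly except for $D_K=725$. By the Lemma proved above this holds for $3\le n\le 7$, with the minimum attained on the quartic field of discriminant $725$; so the whole statement reduces to the range $n\ge 8$.

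First I would cut $n\ge 8$ down to a finite set exactly as in the proof of Theorem~\ref{Fried-entropy-bound}: if $h^*(\a)<h_{min}$, then Zimmert's inequality \eqref{Zim} (at $s=0.35$) forces $n\le 16$. For each remaining degree $8\le n\le 16$ I would then rerun the enumeration strategy used for $n\le 7$: the hypothesis $h^*(\a)<h_{min}$ gives $R_K<\frac{h_{min}\binom{2n-2}{n-1}}{2^{n-1}}$; Friedman's lower bound $R_K>2g(1/D_K)$ \cite{F}, together with the shape of $g$ (it tends to $-\infty$ near $0$ and decays for large argument), confines $D_K$ to $D_K<c_1(n)$ or $D_K>c_2(n)$ for explicit $c_1(n)<c_2(n)$; an absolute geometry-of-numbers upper bound $D_K<B(n)$ for totally real fields of degree $n$ with $B(n)<c_2(n)$ then eliminates the second branch, leaving finitely many fields with $D_K<c_1(n)$; these one lists (e.g. from the LMFDB tables) and checks $h^*(\a)\ge h_{min}$ field by field, as in Tables~1--4.

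The main obstacle is precisely the step unavailable in the range $8\le n\le 16$: the effective upper bounds $B(n)$ for discriminants of totally real fields produced by geometry of numbers \cite{PZ} are only known to satisfy $B(n)<c_2(n)$ for $n\le 7$, and by the plots of $Z(n,s)$ the analytic regulator lower bounds of Zimmert and Friedman are too weak to reach $h_{min}$ unconditionally for $n$ in this interval. To close the gap I see three lines of attack: (a) sharpen the lower bound for $R_K$ in terms of $D_K$ by optimizing the test function in Weil's explicit formula (Friedman--Skoruppa type refinements), enlarging $c_2(n)$ enough that existing discriminant bounds suffice or that the $D_K>c_2(n)$ branch disappears; (b) extend the Hunter--Pohst--Martinet enumeration of totally real fields to degrees $8$ through $16$ to supply the missing $B(n)$, after which the verification is a finite computation; (c) assume GRH and invoke Odlyzko's conditional discriminant inequalities, which are strong enough both to bound $D_K$ from above in the relevant regime and to reduce the problem to a finite check. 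Under any of these the residual work is a routine, if lengthy, enumeration against number-field tables, completing the identification of the disc-$725$ quartic as the unique minimizer.
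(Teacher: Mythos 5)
You should note first that the statement you are asked to prove is stated in the paper as a \emph{conjecture}: the authors explicitly write that they are not able to prove it, and what they actually establish is (i) the bound $h^*(\a)\ge h_{min}=0.330027\ldots$ for Cartan actions of degrees $3\le n\le 7$, via the Lemma inside the proof of Theorem~\ref{Fried-entropy-bound}, and (ii) only the weaker universal bound $h^*(\a)\ge 0.089$ for $8\le n\le 16$, obtained by optimizing the parameter $s$ in Zimmert's inequality. Your proposal reconstructs this partial argument faithfully: the reduction of the index $k=[U_K:\gamma(\a)]$ to the case $k=1$, the use of Zimmert's bound at $s=0.35$ to force $n\le 16$, and the Friedman-plus-discriminant-bounds enumeration for $n\le 7$ are exactly the steps in the paper.

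The genuine gap is the one you yourself flag: for $8\le n\le 16$ the geometry-of-numbers upper bounds $B(n)$ on discriminants of totally real fields are not known to satisfy $B(n)<c_2(n)$, so the branch $D_K>c_2(n)$ cannot be eliminated and the finite enumeration cannot be launched; moreover the plots of $Z(n,s)$ show that no choice of $s$ in Zimmert's formula pushes the analytic lower bound up to $h_{min}$ in that range. None of your proposed remedies (a)--(c) is carried out, and (c) would in any case yield only a conditional result under GRH, so the proposal does not constitute a proof of the statement. That said, your assessment of where the difficulty lies, and of what number-theoretic input would suffice to close it, coincides precisely with the paper's own discussion; the proposal is an accurate account of the state of the problem rather than a resolution of it.
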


Proposition~\ref{mahler-exp} and Theorem \ref{Fried-entropy-bound} make it plausible that $\ell$-entropies of Cartan actions
grow exponentially with rank for all values of $\ell$ proportional to the rank. Since  for small coefficients of proportionality Proposition~\ref{mahler-exp} applies we need only to consider coefficients larger than $ec$. 

\begin{conj}\label{conj-exp} Let $\ell\ge \frac{e}{2}\log(\frac{1+\sqrt 5}{2})n$. There exist positive constants $a$ and $b$ such that
for any Cartan action $\a$ of rank $n-1$
\[h^\ell(\a)>a\exp bn.
\]
\end{conj}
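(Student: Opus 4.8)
The plan is to reduce the conjecture to a purely lattice-geometric statement about the logarithmic unit lattice and then to isolate the number-theoretic input that is missing. Fix a Cartan action $\a$ of rank $n-1$ attached to a totally real field $K=\Q(\lambda)$ of degree $n$, and let $\Phi\colon\Rnn\to H_0:=\{y\in\Rn:\sum_j y_j=0\}$ be the logarithmic embedding $\bt\mapsto(\chi_1(\bt),\dots,\chi_n(\bt))$ of \eqref{Lyap}. By Proposition~\ref{diag} and \eqref{ell1} the entropy norm is the pullback under $\Phi$ of $\tfrac12\|\cdot\|_1$, and $\Phi$ carries the acting lattice $\Z^{n-1}$ onto the logarithmic unit lattice $\Lambda_K:=\Phi(\Z^{n-1})\subset H_0$, whose covolume is, up to the usual factor $\sqrt n$, the regulator $R_K$. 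A rank-$\ell$ subaction $\beta$ then corresponds to a rank-$\ell$ sublattice $M\subset\Lambda_K$ spanning a subspace $W$, and by Definition~\ref{FriedEntropy} its Fried entropy is $h^*(\beta)=2^\ell/(\ell!\,vol_M(B_\beta))$, where $B_\beta=W\cap B(h^\a_\mu)$ is the restricted entropy ball and the volume is normalized so that $M$ has covolume $1$.

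First I would establish the clean bound
\[
h^\ell(\a)\ \ge\ \frac{1}{\ell!}\prod_{i=1}^{\ell}\lambda_i(\Lambda_K),
\]
where $\lambda_1\le\cdots\le\lambda_{n-1}$ are the successive minima of $\Lambda_K$ in the entropy norm. Indeed, Minkowski's second theorem applied to $B_\beta$ and $M$ gives $vol_M(B_\beta)\le 2^\ell/\prod_{i\le\ell}\mu_i(M)$, whence $h^*(\beta)\ge \prod_{i\le\ell}\mu_i(M)/\ell!$; and since $M\subseteq\Lambda_K$ every family of $i$ independent short vectors of $M$ is one for $\Lambda_K$, so $\mu_i(M)\ge\lambda_i(\Lambda_K)$. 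Taking the infimum over $\beta$ (Definition~\ref{k-entropy}) yields the displayed bound. This reduction is essentially tight up to the factor $\ell!$, the infimum being realized by the sublattice spanned by the $\ell$ shortest independent units; hence no improvement can come from the geometry alone.

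The conjecture is thereby equivalent to an exponential lower bound for the partial product $\prod_{i=1}^{\ell}\lambda_i(\Lambda_K)$ when $\ell\ge ecn$, and this is exactly where the two available estimates fail to meet. Proposition~\ref{1ent} supplies only the uniform floor $\lambda_i\ge cn$, so that $\prod_{i\le\ell}\lambda_i\ge(cn)^\ell$ and, by Stirling, $(cn)^\ell/\ell!$ is exponential precisely for $\ell<ecn$; this is the content of Propositions~\ref{l-ent} and \ref{mahler-exp}, and it degenerates at exactly the threshold appearing in the statement. At the opposite end, Minkowski's second theorem for the full lattice together with Definition~\ref{FriedEntropy} gives $\prod_{i=1}^{n-1}\lambda_i\ge h^*(\a)$, which is exponential by Theorem~\ref{Fried-entropy-bound}; but this controls all $n-1$ minima jointly and is blind to the first $\ell$ of them.

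The missing ingredient, and the main obstacle, is a lower bound showing that the successive minima of $\Lambda_K$ cannot cluster near the floor $cn$: one needs that a definite proportion of them grow, equivalently an exponential lower bound for the covolume of an arbitrary system of $\ell$ multiplicatively independent units (an ``$\ell$-regulator''). The worst case to defeat is a highly skew unit lattice in which many fundamental units simultaneously attain near-minimal Mahler measure, against which the single-unit estimate \eqref{Mahlerreal} is powerless. I would attempt two routes. The analytic route adapts the Zimmert--Friedman method (an integral transform of the Dedekind zeta function) so as to bound partial regulators directly; the obstruction is that the analytic class number formula involves only the full regulator $R_K$, so isolating $\prod_{i\le\ell}\lambda_i$ demands a genuinely new identity. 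The Diophantine route seeks to bound the number of independent units of height below a prescribed level via height-counting and equidistribution estimates of Bombieri--Zannier type, converting such a bound into growth of the successive minima. Either route would close the argument through the reduction above; absent such a partial-regulator bound in the range $ecn\le\ell\le n-1$, the statement remains the conjecture as posed.
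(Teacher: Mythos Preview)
The statement in question is labelled a \emph{conjecture} in the paper and no proof is given there; the paper merely motivates it by observing that Proposition~\ref{mahler-exp} covers the range $\ell<ecn$ and Theorem~\ref{Fried-entropy-bound} covers $\ell=n-1$, leaving the intermediate range open. So there is no ``paper's own proof'' to compare against.

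Your proposal is accordingly not a proof either, and you are upfront about this in the final paragraph. What you have written is a correct and rather sharp \emph{reduction}: the inequality $h^\ell(\a)\ge\prod_{i\le\ell}\lambda_i(\Lambda_K)/\ell!$ follows cleanly from Minkowski's second theorem together with the monotonicity $\mu_i(M)\ge\lambda_i(\Lambda_K)$ for sublattices, and your identification of the missing ingredient---an exponential lower bound for partial products of successive minima of the logarithmic unit lattice, i.e.\ a ``partial regulator'' estimate---is exactly right. Your observation that the Schinzel--H\"ohn--Skoruppa bound \eqref{Mahlerreal} controls only $\lambda_1$ uniformly while Zimmert/Friedman control only the full product $\prod_{i\le n-1}\lambda_i\ge h^*(\a)$ pinpoints precisely why the conjecture is open. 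This goes somewhat beyond what the paper says explicitly (the paper simply states the conjecture without the successive-minima reformulation), so as an analysis of the problem your write-up is useful; but it remains, as you yourself conclude, a statement of what would need to be proved rather than a proof.
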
 

\subsection{Rigidity of maximal rank actions} The reason we pay such attention to Cartan actions is that they provide essentially the universal model for positive entropy maximal rank actions of
 $\Zk, k\ge 2$. This is one of the principal results of \cite{KRH-a}. 
\begin{theorem*}\label{TMain1}  Let $\a$ be a $C^r, \,1+\theta\le r\le  \infty $ maximal rank positive entropy action 
  on a smooth manifold $M$  of dimension $n\ge 3$.

Then there  exist:
\begin{itemize}

\item  disjoint measurable sets of equal measure  $R_1,\dots, R_m\subset M$ such that $R= \bigcup_{i=1}^m R_i $ has full measure and the action $\a$  cyclically interchanges those sets. Let $\Gamma\subset \Znn$ be the stationary subgroup of any of the sets $R_i$ ($\Gamma$ is of course isomorphic to $\Znn$);
 
\item  a Cartan action  $\ao$ of $\Gamma$   by affine transformations   of either the torus $\Tn$ or the infratorus  $\Tn/\{\pm\Id\}$ that we will call  {\em the algebraic model}; 
  
 \item measurable maps  $h_i: R_i\to\Tn$ or $h_i: R_i\to\Tn/\{\pm\Id\}, \, i=1,\dots, m$; 
 \end{itemize}
 such that
  
\begin{enumerate}
\item \label{TM1} $h_i$ is bijective almost everywhere and $(h_i)_*\mu=\l$, the Lebesgue (Haar) measure on $\Tn$ (correspondingly $\Tn/\{\pm\Id\}$);

\item \label{TM2} $\ao\circ h_i=h_i\circ \a$;

\item \label{TM3} for almost every $x\in M$  and every ${\bf n}\in \Znn$ the restriction of $h_i$ to the 
stable manifold $W^s_x$ of $x$ with respect to $\a({\bf n})$ is  a $C^{r-\epsilon} $ diffeomorphism for any $\epsilon>0$.

\item \label{TM4} $h_i$ is  $C^{r-\epsilon}$ in the sense of Whitney on a set  whose complement  to $R_i$ has arbitrary small measure;  they are saturated by  local stable manifolds.
\end{enumerate}
\end{theorem*}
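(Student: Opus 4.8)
I would prove Theorem~A by combining the measure-classification results for higher rank abelian actions with nonstationary normal-form theory along the Lyapunov foliations. First I would invoke the dichotomy $(\frak D)$: after reducing to the general position case by Proposition~\ref{general-position}, the results of \cite{KKRH} and \cite{K-DCDS} show that, since $\a$ has positive entropy, the entropy function is not identically zero and hence $\mu$ is absolutely continuous. The measure need not be ergodic for the whole of $\Znn$; its ergodic components are permuted by $\a$, and since there are only finitely many of a given (positive) entropy value they form the cyclically interchanged sets $R_1,\dots,R_m$, with $\Gamma\subset\Znn$ the common stabilizer. Normalizing the restriction $\mu_1$ of $\mu$ to $R_1$ gives an ergodic, absolutely continuous $\Gamma\cong\Znn$-invariant measure, and it suffices to build the algebraic model for $(\Gamma,R_1,\mu_1)$ and transport it around the cycle. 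For this action, Pesin theory together with the Ledrappier--Young / Pesin entropy formula \eqref{entropyformula} gives a full Lyapunov splitting $TM=\bigoplus_j E_{\chi_j}$ with linear functionals $\chi_j$; maximal rank together with the general position property and $\sum_j\chi_j=0$ forces exactly $n$ nonzero exponents, each with a one-dimensional coarse distribution $E_{\chi_j}$ integrating to a one-dimensional foliation $\mathcal W_j$ --- precisely the configuration of the Cartan model described by Proposition~\ref{diag}.

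The heart of the argument is the linearization. Because each $E_{\chi_j}$ is one-dimensional, the action along $\mathcal W_j$ is conformal on Lyapunov-regular orbits, so nonstationary normal-form theory produces, for a.e.\ $x$, a $C^{r-\epsilon}$ diffeomorphism $\mathcal W_j(x)\to\mathbb R$ conjugating the dynamics to the linear scalings $t\mapsto e^{\chi_j({\bf n})}t$; this family is measurable in $x$ and unique up to affine rescaling, which forces the holonomies along $\mathcal W_j$ inside any stable manifold of $\a({\bf n})$ to be affine in these charts. Composing the charts on the leaves $\mathcal W_j$ through a Pesin stable manifold $W^s_x$ yields a $C^{r-\epsilon}$ coordinate in which the action on $W^s_x$ is linear; this is already clause~(\ref{TM3}).

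It remains to glue the local affine structures into a global one. Using absolute continuity of the stable and unstable foliations and ergodicity of $\Gamma$, the affine charts on stable and unstable leaves patch along a set of full measure into a single affine structure; its completeness identifies the universal cover with $\Rn$ and the deck group with a lattice $L\cong\mathbb Z^n$ (or an index-two overgroup, which is what yields the two possibilities $\Tn$ and $\Tn/\{\pm\Id\}$). In these coordinates $\Gamma$ acts by affine automorphisms preserving $L$, i.e.\ by commuting elements of $GL(n,\mathbb Z)$ up to translation; positivity of entropy and the one-dimensional exponent structure make this a Cartan action $\ao$, its eigenvalues being multiplicatively independent units of a totally real field as in \cite{KKS}, and the resulting measurable conjugacy $h_1$ satisfies $(h_1)_*\mu_1=\l$ by absolute continuity, giving (\ref{TM1}) and (\ref{TM2}) once transported by the cyclic symmetry. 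For (\ref{TM4}) one applies Lusin's theorem to the measurable family of $C^{r-\epsilon}$ stable-manifold conjugacies to obtain uniform $C^{r-\epsilon}$ regularity on a compact subset of $R_i$ of almost full measure, then saturates by local stable manifolds using Pesin's absolute continuity theorem to get Whitney regularity.

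The deepest input is the measure classification of \cite{KKRH}, which is what upgrades positive entropy to absolute continuity; granting that, the genuine work is the gluing in the previous paragraph --- showing that the pointwise affine structures supplied by the normal-form theory cohere into a single affine structure on a full-measure set with no residual monodromy, and that its metric completion is compact with deck transformations acting by toral automorphisms. Controlling this, while simultaneously bootstrapping measurable regularity of the conjugacy to Whitney regularity, is where ergodicity of the $\Znn$-action, rigidity of the affine holonomies, and Pesin's absolute continuity all have to be used together. A complete treatment is carried out in \cite{KRH-a}.
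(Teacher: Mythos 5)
The first thing to say is that this paper contains no proof of Theorem~A at all: the statement is imported verbatim from \cite{KRH-a} (``This is proved in \cite{KRH-a} and is quoted below as Theorem~A''), so there is no internal argument to compare your sketch against. Judged on its own terms, your outline does follow the broad strategy of \cite{KRH-a} --- measure rigidity from \cite{KKRH} upgrading positive entropy to absolute continuity, one-dimensionality of the coarse Lyapunov distributions in the maximal rank setting, nonstationary normal forms producing affine structures on the one-dimensional Lyapunov leaves, and a globalization step identifying the space with a torus or infratorus on which $\Gamma$ acts by affine Cartan automorphisms. But as a proof it has at least two genuine gaps beyond the honest deferral of the gluing step.

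First, your derivation of the finite cyclic decomposition $R_1,\dots,R_m$ is wrong as stated. You argue that the ergodic components of $\mu$ under a subaction ``are permuted by $\a$, and since there are only finitely many of a given (positive) entropy value they form the cyclically interchanged sets.'' There is no reason for an ergodic decomposition with respect to a finite-index subgroup to have finitely many components just because they share an entropy value; a priori there could be a continuum of them, all isomorphic. The finiteness and cyclic structure come from the structure theory of the action (the periodic/Kronecker factor being finite, which is exactly why the paper can say that weak mixing forces $m=1$), not from counting components by entropy. Second, the globalization paragraph asserts the two hardest points --- that the patched affine structure is \emph{complete}, so that the developing map identifies the universal cover with $\Rn$, and that the deck group is a lattice with at worst a $\{\pm\Id\}$ extension --- without any argument; completeness of a measurably defined affine structure supported only on a full-measure set is precisely where the work of \cite{KRH-a} lies (there the identification is obtained by showing that the leafwise translations generate a transitive measurable $\Rn$-action, not by a developing-map argument). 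You correctly flag this as the crux, but flagging it is not proving it, so the proposal should be read as a plausible roadmap to the external reference rather than a proof.
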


Notice that taking a finite factor does not change the entropy function and hence the value of Fried average entropy. Passing to a finite index subgroup multiplies the value of the Fried average entropy by the index of the subgroup, see Proposition~\ref{Fried-properties}, \eqref{entropy-index}. Since for a weakly mixing action $m=1$, Theorem~A, 
 Proposition~\ref{general-position} and Theorem~\ref{Fried-entropy-bound} imply the following   rigidity property of the Fried average  entropy. 

\begin{cor}\label{CorollaryMain} The Fried average entropy of a weakly mixing maximal  rank action  of $\Zk, \, k\ge 2$,  is either equal to zero or is greater than $0.089$; furthermore the lower bound grows with $k$ exponentially, as in \eqref{friedexp}. 
\end{cor}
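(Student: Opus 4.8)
The plan is to deduce Corollary~\ref{CorollaryMain} from the structure theorem (Theorem~A) together with Proposition~\ref{general-position} and Theorem~\ref{Fried-entropy-bound}, by reducing an arbitrary weakly mixing maximal rank action to the algebraic (Cartan) model for which the quantitative bounds are already available. First I would dispose of the trivial alternative: by the dichotomy $(\frak D)$, if the Fried average entropy of $\a$ is not positive then it is zero, so we may assume $h^*(\a)>0$, i.e. we are in case $(\bf P)$ and, after Proposition~\ref{general-position}, $\a$ is a general position maximal rank action. Then I would invoke Theorem~A: since $\a$ is weakly mixing, the periodic factor is trivial, so $m=1$, the stationary subgroup $\Gamma$ is all of $\Znn$, and the maps $h_i$ assemble into a single measurable isomorphism $h\colon M\to\Tn$ (or $\Tn/\{\pm\Id\}$) with $h_*\mu=\l$ and $\ao\circ h=h\circ\a$, where $\ao$ is a Cartan action by affine maps.

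The next step is to observe that the Fried average entropy is a measure-conjugacy invariant that depends only on the entropy function, and that passing to the algebraic model does not change it. Concretely: the entropy function $h^\a_\mu$ equals $h^{\ao}_\l$ because $h$ intertwines the actions and carries $\mu$ to $\l$; the affine model has the same Lyapunov exponents (\ref{Lyap}) as the underlying linear Cartan action, since the translation part does not affect entropy; and passing to the infratorus $\Tn/\{\pm\Id\}$ is a finite factor, which by the remark before the corollary leaves the entropy function—and hence $\vol(B(h^\a_\mu))$ and $h^*(\a)$—unchanged. Therefore $h^*(\a)=h^*(\ao)$ where $\ao$ is a genuine Cartan action of $\Znn$ on $\Tn$, and Theorem~\ref{Fried-entropy-bound} applies verbatim, giving both $h^*(\a)\ge 0.089$ and the exponential lower bound \eqref{friedexp}.

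The only subtlety worth spelling out is the compatibility of the normalizations. The Fried average entropy of a $\Zk$ action is defined via its suspension with the volume element coming from $\Zk$ (Definition~\ref{FriedEntropy}), and Theorem~A produces an action of $\Gamma\cong\Znn$ which, in the weakly mixing case, is literally $\Znn$ with its standard generators; so no index factor from Proposition~\ref{Fried-properties}(\ref{entropy-index}) intervenes and the constant is exactly the one in Theorem~\ref{Fried-entropy-bound}. (In the non-weakly-mixing case $m>1$ and one would pick up the factor $m$ together with a finite-index issue, which is precisely why the hypothesis is imposed and why \cite[Remark 7]{KRH-a} can produce small positive values; this is not needed here.) I expect the main—really the only—obstacle is bookkeeping: verifying that the affine-versus-linear and torus-versus-infratorus passages genuinely preserve the entropy norm and hence the normalized volume, so that the numerical bound transfers without loss. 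Once that is checked, the corollary is immediate.
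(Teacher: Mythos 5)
Your argument is correct and follows essentially the same route as the paper: the paper likewise combines Theorem~A (with $m=1$ from weak mixing, so no finite-index factor from Proposition~\ref{Fried-properties}(\ref{entropy-index})), the invariance of the entropy function under finite factors, Proposition~\ref{general-position} for the degenerate case, and Theorem~\ref{Fried-entropy-bound} for the quantitative bounds. Your extra remarks on the affine-versus-linear and torus-versus-infratorus bookkeeping are exactly the points the paper compresses into the sentence preceding the corollary.
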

From (\ref{Fried-regulator}) and taking into account that for a general ergodic action the Fried average entropy may only be a rational multiple of that for the weakly mixing ergodic components of its restriction to a certain finite index subgroup, we also have
\begin{cor} The Fried average entropy of a maximal  rank action  of $\Zk, \, k\ge 2$, may take only countably many values.
\end{cor}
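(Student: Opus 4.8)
The plan is to trace every admissible value of $h^*(\a)$ back to the regulator formula \eqref{Fried-regulator}--\eqref{regulator-index} and then to invoke the classical fact that, for each fixed degree, there are only countably many number fields up to isomorphism. The relevant number-theoretic input is the Hermite--Minkowski finiteness theorem: for a fixed degree $n$ and a fixed bound there are only finitely many number fields whose discriminant does not exceed that bound, so the set of isomorphism classes of totally real fields of degree $n$ is countable, and each such field $K$ contributes a single regulator $R_K\in\mathbb R_{>0}$.

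First I would split according to the dichotomy ${\bf(\frak D)}$: for a maximal rank action of $\Zk$, $k\ge2$, with ergodic invariant measure $\mu$, either $h^*(\a)=0$ (a single value), or $\mu$ is absolutely continuous and $h^*(\a)>0$. In the second case I would apply Theorem~A, with $n=k+1\ge 3$. It produces an index-$m$ subgroup $\Gamma\subset\Znn$, a finite (infra)toral factor, and a conjugacy (measure preserving after normalization) of the restriction $\a|_\Gamma$, acting on one of the sets $R_i$, to a Cartan action $\ao$ by affine maps of $\Tn$ or $\Tn/\{\pm\Id\}$. Now I would observe three things that allow replacing $\a$ by a \emph{linear} Cartan action of degree $n$: taking a finite factor does not change the entropy function; the translation part of an affine automorphism carries only zero Lyapunov exponents and so leaves the entropy function unchanged; and the entropy function of the non-ergodic system $(\a|_\Gamma,\mu)$ is the average of those of its ergodic components, which are mutually conjugate. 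Hence $h^*(\a|_\Gamma)$ equals $h^*$ of a linear Cartan action of degree $n$, and by Proposition~\ref{Fried-properties}(\ref{entropy-index}) one has $h^*(\a|_\Gamma)=m\,h^*(\a)$, so $h^*(\a)=\tfrac1m\,h^*(\ao)$. Finally \eqref{Fried-regulator}--\eqref{regulator-index} give $h^*(\ao)=d\,R_K\,2^{n-1}/\binom{2n-2}{n-1}$ with $K$ totally real of degree $n$ and $d=[\mathcal U_K:\gamma(\ao)]\in\Z_{\ge1}$.

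Assembling the pieces, every ergodic maximal rank action of $\Zk$, $k\ge2$, satisfies
\[
h^*(\a)\in\{0\}\cup\Bigl\{\tfrac{d\,R_K\,2^{n-1}}{m\binom{2n-2}{n-1}}\ :\ n\ge3,\ K\ \text{totally real},\ [K:\Q]=n,\ d,m\in\Z_{\ge1}\Bigr\},
\]
and the set on the right is a countable union --- over $n$, over the countably many fields $K$ of each degree, and over the countable parameter set $(d,m)$ --- of singletons, hence countable, which is exactly the statement. The place that needs care, rather than the counting itself, is the bookkeeping in the reduction step: one must be sure Theorem~A is invoked for the correct group (the full $\Znn$-action versus $\a|_\Gamma$), that the index-$m$ factor and the passage to an infratorus are compatible with Proposition~\ref{Fried-properties}, and that discarding the translation part of the affine Cartan model is legitimate for computing $h^*$. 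Once these identifications are in place, \eqref{Fried-regulator} does all the work, and the countability of the set of number fields finishes the argument.
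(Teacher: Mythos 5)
Your argument is correct and is essentially the paper's own: the paper derives this corollary in one line from \eqref{Fried-regulator} together with the observation that a general ergodic action's Fried average entropy is a rational multiple of that of the weakly mixing components of its restriction to a finite index subgroup, which is exactly the reduction you carry out in detail before counting fields, unit-group indices, and subgroup indices. The extra bookkeeping you supply (discarding translation parts, passing through the infratorus, averaging over conjugate ergodic components) is correct and only makes explicit what the paper leaves implicit.
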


\section{Connection between the  Fried average entropy and slow entropy}
\subsection{Refining the slow entropy type invariants}\label{SectionSlow}
Slow entropies  for actions of higher rank abelian groups have been defined and discussed in \cite{KTh}. As we mentioned in the introduction, the motivation for using  slow entropies for  smooth actions of higher rank abelian groups is as follows: the number of different  codes  with respect to 
a nice partition for any smooth action grows no faster than exponentially  against the size of the  ball in  a group, i.e. the  logarithm of this number grows at most linearly. Hence 
in order to produce a non-vanishing invariant  logarithms of those numbers need to be normalized not by the number of elements in the ball but by  its $k$th root where, in the case of abelian groups, $k$ is the rank of the acting  group.  This corresponds to the slow entropy with the scale function $s^{1/k}$.

One needs to  elaborate the  constructions from  \cite[Section 1.3]{KTh}. 
First, we will restrict ourselves to the case of $\Zk$ actions that are the only one of interest in the context of this paper. In this context the F\o lner sequence that appears in the discussion  will be a sequence of balls in a  metric in $\Zk$ induced by a norm in the ambient space $\Rk$.  Neither of two
 specific suggestions  from  \cite[Section 1.3]{KTh} produce numerical invariants that distinguish maximal rank full entropy actions. Other choices are alluded to in the sentence: ``The reader will find without much difficulty other convenient characteristics of the asymptotic growth''. We will describe one such choice shortly. This will produce a numerical invariant that depends on a choice of a norm in $\Rk$ and in particular,   depends on a choice of generators  for a $\Zk$ action. To address this,  we will minimize the  value of this  norm-dependent  slow entropy over all choices of norm with the unit ball of volume one. We will show that in the case we are interested, namely 
 the case ({\bf P}) of the alternative in  section~\ref{SectionFried} for affine actions on the torus and Lebesgue measure   the minimum is always positive and is achieved for a certain  polyhedral norm. By extending this definition to $\Rk$ actions and connecting it with the quantities that appear in the Ledrappier-Young formula for entropy  one can show that the resulting notion satisfies conditions (i)-(iv) from the introduction  and that  the statement about polyhedral minimizers also holds in general.  Detailed treatment of the general case  will appear in  a forthcoming paper by Changguang Dong.
 
In the present paper we will restrict  specific  calculations to the case of Cartan actions. We will show that in each dimension the values of the Fried average entropy and the slow entropy determine each other, that via Theorem~A will imply corresponding results for arbitrary maximal rank actions. 
 
For a moment we return to a general situation. Let  $\a$ be an action of  a discrete group $\Gamma$ by transformation of  a space $X$ preserving measure $\mu$. 
All slow entropy constructions begin with defining the essential  number of substantially different orbit segments 
 or blocks. A block is defined by an initial condition $x\in X$ and a finite set $F\subset \Gamma$  it is simply $\{ \a(\gamma)x, \gamma\in F\}$. The word ``essential''  means that sets of  fixed small measure can be ignored. ``Substantially different''
 is defined by using a family of metrics or semi-metrics  $d_F$  in the spaces of $F$-blocks and considering blocks with $d_F$
 distances greater  that a fixed small number  as  essentially different.  There are  two standard ways to define $d_F$.
 One, that is based on an idea of coding. One fixes a finite  partition $\xi$ of the space $X$ into measurable sets $c_1,\dots,c_N$ and defines $d_F$ as the pull-back of the Hamming metric in the space of codes, see \cite[Section 1.1]{KTh} for details. In the  other method one  assumes that $\a$ is an action by homeomorphisms of a compact metrizable space, starts  with a metric $d$ in $X$ and defines $d_F=\max_{\gamma\in F}d\circ\a(\gamma)$.
 Then one defines $S_{d_F}(\a, \epsilon, \delta)$ as the minimal number of $d_F$ balls whose union has  measure
 $\ge 1-\delta$. To make notations more suggestive this quantity in the first of the above cases is denoted by 
 $S_\xi^H(\a, F,\epsilon, \delta)$ and in the second by $S_d(\a,F,\epsilon, \delta)$.
 
Now we go back to the abelian case. Let $\a$ be an action of $\Zk$. Given a norm $p$ on $\mathbb{R}^k$ let $F^p_s$ be the intersection of the lattice $\Zk$ with  the ball centered in $0$ and radius $s$. We define the slow entropy of $\alpha$ with respect to the norm $p$ and the partition $\xi$ as 

\begin{equation}\label{eqslowpartition}sh(p,\alpha,\xi)=\lim_{\epsilon,\delta\to 0}\limsup_{s\to\infty} \frac{1}{s}\log S^H_{\xi}(\alpha,F_s^p,\epsilon,\delta).\end{equation}
Then we define 

\begin{equation}\label{eqslowent}sh(p,\alpha)=\sup_{\xi}sh(p,\alpha,\xi).\end{equation}

\cite[Proposition 1]{KTh} states that slow entropy can be defined using any generating sequence of  partition, e.g.  in the case of an action by homeomorphisms any sequence of partitions such that the maximal diameter on an element  converges to zero. \cite[Proposition 2]{KTh} then implies that 

\begin{equation}\label{eqslowmetric}sh(p,\alpha)=\lim_{\epsilon,\delta\to 0}\limsup_{s\to\infty} \frac{1}{s}\log S_d(\alpha,F_s^p,\epsilon,\delta).\end{equation}

Notice that since \eqref{eqslowent} is a measure-theoretic isomorphism invariant the limit in \eqref{eqslowmetric} does not depend on the choice of metric $d$. 
 
Finally we define the slow entropy of $\alpha$ as $$sh(\alpha)=\inf_{p:vol(p)=1}sh(p,\alpha)$$ where $vol(p)$ is the volume of the unit ball in the norm $p$ (we are assuming $\Zk\subset\Rk$ has co-volume $1$). If $\alpha$ is understood, we will omit reference to it like $sh(p)=sh(p,\alpha)$. 

\begin{rem} Taking the logarithm in \eqref{eqslowpartition} is essential  for obtaining a numerical invariant  that takes finite positive values in many cases. This is not true for two suggestions for asymptotic invariants in \cite[Section 1.3]{KTh}. The first of those may be too subtle and the second is too crude. A good analogy between the latter and our definition  is between the Hausdorff dimension of a set  and  Hausdorff measure  corresponding  to a fixed value of Hausdorff dimension.
\end{rem}

\subsection{Slow entropy and Lyapunov exponents}

Let $\a$ be a $\Zk$ action by affine maps of  $\mathbb T^n$. Let $\Rn=E_1\oplus\dots\oplus E_r$ be the splitting into coarse Lyapunov subspaces of this linear action and let us fix once and for all the max norm $|\cdot|$ on $\Rn$ with respect to the splitting $E_1\oplus\dots\oplus E_r$ i.e. if $v=v_1+\dots+v_r$ with respect to the splitting $E_1\oplus\dots\oplus E_r$ then $|v|=\max_i|v_i|$. 

Let $p$ be a norm on $\Rk$ such that the unit ball has volume one. We want to compute $sh(p,\alpha)$.  We will use \eqref{eqslowmetric}  for the metric on the torus 
defined by the above defined norm $|\cdot |$. Thus the  metric $d_{F^p_s}$ is defined by the norm 
 $$|v|^p_s=\max_{t:p(t)\leq s}|\alpha(t)v|.$$
Let $B_s^p$ be the unit ball in the norm $|\cdot|^p_s$.
\begin{lem} \begin{equation}\label{pesinlinearvolume}sh(p,\alpha)=\lim_{s\to\infty}-\frac{1}{s}\log vol (B_s^p)\end{equation}
\end{lem}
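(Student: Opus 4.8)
The plan is to show that the two quantities in \eqref{pesinlinearvolume} agree by unwinding the definition of $sh(p,\alpha)$ given in \eqref{eqslowmetric} and identifying $S_d(\alpha,F^p_s,\epsilon,\delta)$, up to controllable factors, with the reciprocal volume of the ball $B^p_s$. Recall from \eqref{eqslowmetric} that $sh(p,\alpha)=\lim_{\epsilon,\delta\to 0}\limsup_{s\to\infty}\tfrac1s\log S_d(\alpha,F^p_s,\epsilon,\delta)$, where $S_d(\alpha,F^p_s,\epsilon,\delta)$ is the minimal number of $\epsilon$-balls in the metric $d_{F^p_s}$ whose union has measure $\ge 1-\delta$. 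Since $\alpha$ acts by affine maps of $\T^n$, the metric $d_{F^p_s}$ is (locally, at scales below the injectivity radius) exactly the translation-invariant metric induced by the norm $|\cdot|^p_s$ on $\Rn$, so a $d_{F^p_s}$-ball of radius $\epsilon$ is the image in $\T^n$ of an $\epsilon$-scaled copy of $B^p_s$. The first step is therefore to observe that covering a fixed-measure subset of $\T^n$ by such balls requires a number of balls comparable to $\tfrac{1}{\mathrm{vol}(\epsilon B^p_s)}=\tfrac{\epsilon^{-n}}{\mathrm{vol}(B^p_s)}$: the lower bound is immediate because the total measure covered is at most the number of balls times $\mathrm{vol}(\epsilon B^p_s)$, and since $\mathrm{vol}(B^p_s)\to 0$ (the matrices $\alpha(t)$ have expanding directions for $t$ in any half-space not contained in a Lyapunov hyperplane), the balls eventually are small enough that this elementary counting applies.

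Next I would establish the matching upper bound on $S_d$. Here the subtlety is that $B^p_s$, being an intersection of ellipsoids (or of boxes, given the max-norm choice), can be very eccentric, so one cannot naively tile $\T^n$ by translates of $\epsilon B^p_s$. But a standard packing/covering argument works: a maximal $\epsilon B^p_s$-packing in $\T^n$ (or in a fixed fundamental domain) has its doubled balls covering everything, and the number of balls in such a packing is at most $\mathrm{vol}(\T^n)/\mathrm{vol}(\epsilon B^p_s)\cdot C_n$ where $C_n$ accounts only for the doubling constant of a norm ball in $\Rn$, which is $2^n$ and in particular independent of $s$. Taking $\tfrac1s\log$ of both the upper and lower bounds, the factors $\epsilon^{-n}$, $C_n$, $2^n$, and $\mathrm{vol}(\T^n)$ all contribute $O(1/s)\to 0$, so they disappear in the limit; the $\delta$ does no harm since covering a $(1-\delta)$-measure set rather than everything changes the count by at most a constant factor (again $s$-independent). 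This yields
\[
\lim_{s\to\infty}\frac1s\log S_d(\alpha,F^p_s,\epsilon,\delta)=\lim_{s\to\infty}-\frac1s\log\mathrm{vol}(B^p_s)
\]
for each fixed small $\epsilon,\delta$, and since the right-hand side no longer depends on $\epsilon,\delta$, the outer limit in \eqref{eqslowmetric} returns the same value, giving \eqref{pesinlinearvolume}. One should also check that the limit on the right actually exists; this follows from submultiplicativity of $s\mapsto \mathrm{vol}(B^p_s)^{-1}$ along the ordering of radii (equivalently, superadditivity of $-\log\mathrm{vol}(B^p_s)$), an argument in the spirit of Fekete's lemma, which I would note but not belabor.

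The main obstacle I anticipate is purely bookkeeping: making precise the passage between the metric $d_{F^p_s}$ on $\T^n$ and the norm $|\cdot|^p_s$ on $\Rn$ at small but fixed scale $\epsilon$ — i.e. controlling the injectivity-radius effects so that balls of $d$-radius $\epsilon$ really are isometric to $\epsilon B^p_s$ for all large $s$ — together with verifying that the max-norm choice of $|\cdot|$ with respect to the coarse Lyapunov splitting makes $|\alpha(t)v|$ genuinely computable as a max over the coarse blocks (this is where property \eqref{eqslowmetric}'s independence of the metric $d$ is used to justify picking this convenient $d$ in the first place). None of this is deep, but it is where the affine/linear structure and the specific norm enter, and it is the step that has to be written carefully. $\square$
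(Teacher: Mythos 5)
Your proof is correct and follows essentially the same route as the paper: translation invariance identifies $S_d(\alpha,F_s^p,\epsilon,\delta)$ with $1/(\epsilon^n\, vol(B_s^p))$ up to factors independent of $s$, which then vanish after taking $\frac1s\log$. The only difference is in the upper bound, where the paper tiles a fundamental domain by translates of the parallelepiped $\epsilon B_s^p$ while you use a maximal packing together with the doubling constant of a norm ball --- both contribute only $s$-independent overhead (and your variant has the minor merit of not needing $B_s^p$ to literally tile).
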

\begin{proof} Due to translation invariance of the metric $d_{F^p_s}$ all balls of  a given radius have the same volume that for a small enough $\epsilon$ and all $s$ is equal to $\epsilon^n vol (B_s^p)$.  Hence
$$S_d(\alpha,F_s^p,\epsilon,\delta)\ge\frac{1-\delta}{\epsilon^n vol (B_s^p)}$$
that gives  a lower bound independent of $\epsilon$ and $\delta$. 

 To get an upper bound notice that the parallelepiped $\epsilon B_s^p$ tiles $\Rn$and hence the tiles provide a disjoint cover  of a fundamental domain
for $\Tn$ except for a neighborhood of the boundary of the size $\epsilon\,diam(B_s^p)$. In particular, for a small enough $\epsilon$ the tiles cover an area of volume greater then one half. 
\end{proof}
Now, $v\in B_s^p$ if and only if $|v|^p_s=\max_{t:p(t)\leq s}|\alpha(t)v|\leq 1$ and $vol(B_s^p)$ is up to a constant  the product of the corresponding $vol(B_s^p\cap E_i)$.  Let $J_i(t)$ be the Jacobian of $\a(t)$ along $E_i$ and let $\hat\chi_i(t)=\log |J_i(t)|$. $\hat\chi_i$ is proportional to any Lyapunov exponent appearing in  the coarse Lyapunov direction $E_i$. 

Let $$M_i^p(s)=\max_{t:p(t)\leq s} \hat\chi_i(t).$$ Then we have that (up to a constant) 
\[
vol(B^p_s)=\prod_i\exp(-M_i^p(s)).
\]
Observe that since $\hat\chi_i$ and $p$ are $1-$homogeneous we get that $M_i^p(s)=sM^p_i(1)$ hence $$-\frac{1}{s}\log vol(B^p_s)=\sum_iM_i^p(1)=\sum_i\max_{t:p(t)\leq 1}\hat\chi_i(t).$$

Thus for any action $\a$ by affine maps of a torus  the slow entropy for Lebesgue measure  with respect to  the norm $p$ is  equal to 

\begin{equation}\label{slow-linear}sh(p,\alpha)=\sum_i\max_{t:p(t)\leq 1}\hat\chi_i(t)=\sum_i\max_{t:p(t)\leq 1}\chi_i(t)=\sum_ip^*(\hat\chi_i)\end{equation}
where $p^*$ is the dual norm to $p$, i.e. $p^*(u)=\max_{p(t)=1}|u(t)|$.

Notice that since by the Pesin entropy formula $h(\a(t))=\frac{1}{2}\sum_i|\chi_i(t)|$ (see \eqref{ell1}) and hence
\begin{equation}\label{slow-Pesin}sh(p,\alpha)\ge 2\max_{t:p(t)\leq 1}h(\a(t)).
\end{equation}

Now we want to minimize this over all norms $p$ with unit ball of volume $1$. 

First, notice that there is dichotomy  that already appeared in section~\ref{SectionFried}. Namely either all elements of the suspension, save for identity, have positive entropy, or a non-trivial element of the suspension has zero entropy. Since we are dealing with actions preserving Lebesgue measure,  by the Pesin entropy formula the second alternative is equivalent to non-triviality of the intersection of all Lyapunov hyperplanes, i.e. existence of a common annihilator  for all Lyapunov exponents. First we show, that, similarly to the case of Fried average entropy, this dichotomy is equivalent to the zero-positive  dichotomy for the slow entropy. 

\begin{prop} For an affine action $\a$ on a torus $sh(\a)=0$ if and only if intersection of all Lyapunov hyperplanes
contains a non-zero vector, or, equivalently, the entropy function is not a norm, or, equivalently, Fried entropy vanishes.   
\end{prop}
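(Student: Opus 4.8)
The plan is to prove the three equivalences by a short chain, using the explicit formula \eqref{slow-linear} for the slow entropy of an affine torus action and the dichotomy ({\bf P})/({\bf O}) recalled in section~\ref{SectionFried}. The equivalences among ``the intersection of all Lyapunov hyperplanes contains a nonzero vector'', ``the entropy function $h^\a_\mu$ is a seminorm but not a norm'', and ``the Fried average entropy vanishes'' are essentially immediate: if $v\neq 0$ lies in $\bigcap_j \ker\chi_j$ then by the Pesin formula \eqref{ell1} $h^\a_\mu(v)=\tfrac12\sum_j|\chi_j(v)|=0$, so $h^\a_\mu$ is a proper seminorm, its unit ``ball'' has infinite volume, and $h^*(\a)=0$ by Definition~\ref{FriedEntropy}; conversely if no such $v$ exists then $h^\a_\mu$ is a genuine norm (it is already homogeneous of degree one, convex and piecewise linear, and vanishes only at the origin), its unit ball is a bounded polyhedron of finite positive volume, and $h^*(\a)>0$. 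This is just Proposition~\ref{Fried-properties}(1) together with case ({\bf O}), and I would dispatch it in a sentence or two.

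The substantive point is therefore $sh(\a)=0 \iff$ case ({\bf O}), i.e. $\iff \bigcap_j\ker\chi_j\neq\{0\}$. For the direction ``({\bf O}) $\Rightarrow$ $sh(\a)=0$'': suppose $v\neq 0$ annihilates every Lyapunov exponent, equivalently every $\hat\chi_i$ (since each $\hat\chi_i$ is proportional to some $\chi_j$). Then $v$ lies in the common kernel of all $\hat\chi_i$, a subspace of dimension $\ge 1$. I would choose a sequence of norms $p_N$ on $\Rk$ whose unit balls all have volume $1$ but which are extremely ``thin'' in directions transverse to $\Ker(\hat\chi_i)^\perp$ — concretely, rescale one coordinate direction (aligned so as to make all the linear functionals $\hat\chi_i$ small) by $N$ and a complementary direction by $1/N$, keeping volume $1$. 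Then $p_N^*(\hat\chi_i)\to 0$ for every $i$, so by \eqref{slow-linear} $sh(p_N,\a)=\sum_i p_N^*(\hat\chi_i)\to 0$, and since $sh(\a)=\inf_{p}sh(p,\a)$ we get $sh(\a)=0$. The only care needed is that the functionals $\hat\chi_i$ all lie in a proper subspace of $(\Rk)^*$, namely the annihilator of $v$; squeezing the dual norm in the direction of $v$ uniformly kills all of them at once.

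For the converse ``$sh(\a)=0 \Rightarrow$ ({\bf O})'', equivalently the contrapositive ``case ({\bf P}) $\Rightarrow$ $sh(\a)>0$'': assume the entropy function is a genuine norm. By \eqref{slow-Pesin}, for every admissible $p$ we have $sh(p,\a)\ge 2\max_{t:p(t)\le 1}h(\a(t))=2\,(h^\a_\mu)^*$-type quantity; more precisely $\max_{p(t)\le 1}h^\a_\mu(t)$ is the mutual-norm ``radius'' of the unit ball of $p$ measured in the entropy norm. The key elementary fact is that for any two norms on $\R^k$ with unit balls $B_p$ (volume $1$) and $B_h$, one has $\max_{t\in B_p}\|t\|_h \ge c_k\,\mathrm{vol}(B_h)^{-1/k}$ for a universal constant $c_k>0$ — because $B_p$ cannot be squeezed inside an arbitrarily small multiple of $B_h$ without losing volume. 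Applying this with $\|\cdot\|_h=h^\a_\mu$ gives $sh(p,\a)\ge 2c_k\,\mathrm{vol}(B(h^\a_\mu))^{-1/k}$, a bound independent of $p$, and since $\mathrm{vol}(B(h^\a_\mu))<\infty$ in case ({\bf P}) this is a positive lower bound for $sh(\a)$. I expect this volume-comparison step — making the inequality ``a volume-one body must have large diameter in any fixed norm'' precise and uniform — to be the main (though not deep) obstacle; everything else is bookkeeping with \eqref{slow-linear} and the already-established Fried dichotomy.
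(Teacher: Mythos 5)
Your proposal follows essentially the same route as the paper in both directions: for the case where the entropy function is a norm, the paper likewise combines \eqref{slow-Pesin} with the observation that a centrally symmetric convex body of volume one must contain a vector of at least some definite size (your version, comparing directly with the entropy norm via $B_p\subseteq rB_h\Rightarrow 1\le r^k\,vol(B_h)$, is if anything cleaner than the paper's detour through the Euclidean norm, and the step you worry about is a one-liner, not an obstacle); for the degenerate case, the paper also exhibits norms whose volume-one unit balls are long in the direction of a vector $t$ annihilating all exponents and thin transverse to it.

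One small but real slip in your degenerate-case construction: rescaling the $v$-direction by $N$ and only \emph{one} complementary direction by $1/N$ does not force $p_N^*(\hat\chi_i)\to 0$ when $k\ge 3$, since a functional $\hat\chi_i$ with a nonzero component along one of the untouched complementary directions keeps $p_N^*(\hat\chi_i)$ bounded below. (Note also that stretching in the $v$-direction by itself changes nothing, since $\hat\chi_i(v)=0$; it is the shrinking of the transverse directions that does the work.) The fix is exactly what your first sentence already describes and what the paper does: take the unit ball to be a long segment along $v$ (or along the full common kernel) times an $\epsilon$-ball in \emph{all} complementary directions, normalized to volume one; then every $p_\epsilon^*(\hat\chi_i)=O(\epsilon)$ and \eqref{slow-linear} gives $sh(p_\epsilon,\a)\to 0$.
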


\begin{proof} First, let's assume that the entropy function is a norm. This implies that the entropy reaches a positive minimum $m$ on the unit sphere. Consider the unit ball  of volume  one  with respect to a  norm $p$. It  is a convex centrally symmetric body  and hence contains a vector $t$ of length greater than a certain absolute constant $C$.
Hence  by \eqref{slow-Pesin} $sh(p,\alpha)\ge 2mC$.

Now assume that for some  $t\neq 0,\,\ \chi_i(t)=0$ for all $i$. Fix an $\epsilon>0$ and consider a norm $p_\epsilon$ whose unit ball is the product of a segment of the line  generated by $t$ and the ball of radius $\epsilon$ in the  hyperplane perpendicular to $t$.  Notice that all Lyapunov exponents of $\a$ are bounded  in absolute value on the unit ball of the norm $p_\epsilon$ by  $\epsilon\cdot const$.
\end{proof}

 Thus we only need to consider the case of positive slow entropy. It appears exactly when the Lyapunov exponents (and hence the vectors $\hat \chi_i$) generate the dual space to $\Rn$. 

\subsection{Convex bodies and minimizers for affine actions} 
Let $\xi_1,\dots, \xi_n\in\Rk$ be a set of spanning  vectors and let $C$ be the symmetric convex hull of the $\xi_i's$, i.e. $C$ is the convex hull of the set $\{\xi_1,\dots, \xi_n, -\xi_1,\dots, -\xi_n\}$.  Let $p_C$ be the Minkowski functional associated to $C$. Remember that given a norm $p$, we denote  its dual norm by  $p^*$, and that $(p^*)^*=p$. Given a convex symmetric body, $C$ let $$C^\circ=\{x:| x\cdot\xi|\leq 1\;\;\;\mbox{for any}\;\;\;\xi\in C\}.$$ Observe that $p_C^*=p_{C^\circ}$.

\begin{lem}
$$p^*_C(v)=\max_i |\xi_i\cdot v|$$
\end{lem}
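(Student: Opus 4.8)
The plan is to show the identity $p_C^*(v) = \max_i |\xi_i \cdot v|$ directly from the definition of the dual norm, using the fact that $C$ is the symmetric convex hull of the vectors $\pm\xi_i$.

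First I would recall that, by definition, $p_C^*(v) = \max_{t \in C} |v \cdot t|$, the maximum being over the unit ball of $p_C$, which is exactly $C$ itself since $C$ is a convex symmetric body. So the task reduces to computing $\max_{t \in C}|v\cdot t|$. The key observation is that the function $t \mapsto |v \cdot t|$ is convex (it is the absolute value of a linear functional), and $C$ is the convex hull of the finite set $\{\pm\xi_1,\dots,\pm\xi_n\}$. A convex function on a polytope attains its maximum at a vertex, hence at one of the $\pm\xi_i$, so $\max_{t\in C}|v\cdot t| = \max_i |v \cdot (\pm\xi_i)| = \max_i |\xi_i \cdot v|$, which is the claim. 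One direction ($\geq$) is immediate since each $\xi_i \in C$; the other direction ($\leq$) is the statement that every $t \in C$, being a convex combination $t = \sum_i (a_i \xi_i + b_i(-\xi_i))$ with $\sum_i (a_i+b_i) = 1$ and $a_i, b_i \geq 0$, satisfies $|v \cdot t| \leq \sum_i (a_i + b_i)|\xi_i \cdot v| \leq \max_i |\xi_i \cdot v|$ by the triangle inequality.

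A subtlety worth a sentence is the justification that $C$ is genuinely the unit ball of $p_C$ and that $p_C$ is a norm at all: this needs $0$ to be in the interior of $C$, which holds precisely because the $\xi_i$ span $\Rk$ (so their symmetric convex hull has nonempty interior containing the origin). This is exactly the standing hypothesis on the $\xi_i$, so no extra work is needed; I would simply remark on it.

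I do not expect any real obstacle here — the statement is essentially the elementary fact that the support function of the convex hull of a set equals the support function of the set, specialized to a finite symmetric set. The only thing to be careful about is bookkeeping with the convex-combination coefficients and the symmetrization, and making sure the degenerate possibility $v = 0$ (where both sides are $0$) is covered, which it trivially is.
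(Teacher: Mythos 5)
Your proof is correct and is in substance the paper's own argument: the paper identifies the polar body $C^\circ$ with $\bigcap_i\{x:|x\cdot\xi_i|\le 1\}$ using exactly the same convex-combination plus triangle-inequality computation you give, and then reads off the Minkowski functional of $C^\circ$. Your direct evaluation of $p_C^*(v)=\max_{t\in C}|v\cdot t|$ as a support function merely bypasses the paper's biduality bookkeeping (the step ``$C^\circ\subset A$ implies $A^\circ\subset C$''), which is a harmless streamlining; your remark that the spanning hypothesis guarantees $p_C$ is a genuine norm is also accurate.
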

\begin{proof} We claim that $$C^\circ=\bigcap_i\{x:|x\cdot\xi_i|\leq 1\}.$$ Indeed, let $A=\bigcap_i\{x:|x\cdot\xi_i|\leq 1\}$. By definition of $C^\circ$ we have that $C^\circ\subset A$. We shall use the following simple property:
\medskip

If $C^\circ\subset A$ then $A^\circ\subset C$ and $C^\circ=A$ if and only if $A^\circ=C$.
\medskip

\noindent We have then that $A^\circ\subset C$, let us prove that $C\subset A^\circ$. Let $\xi\in C$. Then $$\xi=\sum_it_i^+\xi_i-\sum_it_i^-\xi_i=\sum_i(t_i^+-t_i^-)\xi_i,$$ where $\sum_it_i^++\sum_it_i^-=1$, $t_i^\pm\geq 0$.

Take $x\in A$, then $|x\cdot\xi_i|\leq 1$ for any $i$ and hence 
\begin{eqnarray*}
|x\cdot \xi|&=&\left|x\cdot\sum_i(t_i^+-t_i^-)\xi_i\right|=\left|\sum_ix\cdot(t_i^+-t_i^-)\xi_i\right|\\&\leq& \sum_i|t_i^+-t_i^-||x\cdot\xi_i|\leq\sum_it_i^++t_i^-=1
\end{eqnarray*}

So, we get that $\xi\in A^\circ$ and hence $$C^\circ=\bigcap_i\{x:|x\cdot\xi_i|\leq 1\}.$$ Now, for this $C^\circ$ the Minkowski functional is $$p_{C^\circ}(v)=p^*_C(v)=\max_i |\xi_i\cdot v|.$$ \end{proof}
Given a norm, we denote by $vol(p)$ the volume of the unit ball in the norm $p$.

Instead of minimizing the quantity $$\sum_ip^*(\xi_i)$$ over the norms  $p$ with  $vol(p)=1$ we shall minimize the following more homogeneous quantity. Given a family of functionals $ \Xi =(\xi_1,\dots,\xi_n)$ and a norm $p$ in $\Rk$ we define

$$SH_{\Xi}(p)=\frac{\sum_ip^*(\xi_i)}{vol(p)^{\frac{1}{k}}}.$$

This last quantity is invariant by $c\mapsto cp$, hence this is the same.

\begin{prop}
Given  a spanning set of functionals  $\Xi=(\xi_1,\dots, \xi_n)$, the minimum of $SH_{\Xi}(p)$ is attained at a norm of the form $q(x)=\max_ic_i|\xi_i\cdot x|$ for some values of  $c_i\geq 0$. 
\end{prop}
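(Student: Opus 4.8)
The plan is to exploit a compactness/duality argument. Write $f(p) = SH_\Xi(p) = \bigl(\sum_i p^*(\xi_i)\bigr)/vol(p)^{1/k}$. Since this quantity is scale-invariant, we may restrict attention to norms $p$ with $vol(p)=1$, and then we must show the infimum of $\sum_i p^*(\xi_i)$ over that class is attained, and is attained at a norm whose dual unit ball is the symmetric convex hull of points on the rays $\mathbb{R}\xi_i$. First I would pass to the dual side: by the previous lemma, if $q(x)=\max_i c_i|\xi_i\cdot x|$ then $q = p_{C^\circ}^{*}$ fails to be quite what we want directly, so more precisely I would parametrize by the dual norm $p^*$ and note $p^*(\xi_i)$ is linear-ish in the dual ball while $vol(p) = vol((p^*)^\circ)$; the Blaschke–Santó / reciprocity between $vol(p)$ and $vol(p^*)$ is monotone enough for the argument. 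The key observation is: $\sum_i p^*(\xi_i)$ depends on $p^*$ only through the values $p^*(\xi_i)$, i.e. only through how far the unit ball of $p$ reaches in the directions dual to the $\xi_i$.

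Concretely, here are the steps I would carry out in order. (1) Fix a candidate minimizing sequence $p_m$ with $vol(p_m)=1$; show $\sum_i p^*_m(\xi_i)$ is bounded below by a positive constant (the $\xi_i$ span, so $p^*$ cannot be small in all these directions while keeping volume $1$ — use John's ellipsoid or a direct volume bound) and bounded above along the sequence, hence the $p^*_m(\xi_i)$ lie in a compact range. (2) Given the minimizing norm (or a near-minimizer) $p$, set $c_i = 1/p^*(\xi_i)$ (assume all $p^*(\xi_i)>0$; if some vanish, $\xi_i$ contributes nothing and can be dropped) and define $q(x)=\max_i c_i |\xi_i\cdot x| = p_{C^\circ}(x)$ where $C = \mathrm{conv}\{\pm \xi_i/c_i\}$... wait, more carefully: let $C$ be the symmetric convex hull of the points $\xi_i$ rescaled so that $q^*(\xi_i) = p^*(\xi_i)$ for every $i$. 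By construction $\sum_i q^*(\xi_i) = \sum_i p^*(\xi_i)$. (3) Show $vol(q) \geq vol(p)$. This is the crux: the unit ball $B_q$ of $q$ is $\{x : |\xi_i \cdot x| \le 1/c_i \ \forall i\}$, the largest symmetric body all of whose widths in the directions $\xi_i/|\xi_i|$ match the corresponding half-widths of $B_p$; since $B_p$ is itself contained in this intersection of slabs (because $p^*(\xi_i) = q^*(\xi_i)$ forces $B_p \subseteq \{|\xi_i\cdot x|\le 1/c_i\}$ for each $i$), we get $B_p \subseteq B_q$, hence $vol(p) \le vol(q)$. (4) Conclude: $SH_\Xi(q) = \sum_i q^*(\xi_i)/vol(q)^{1/k} \le \sum_i p^*(\xi_i)/vol(p)^{1/k} = SH_\Xi(p)$, so replacing any norm by the associated polyhedral norm of the stated form does not increase $SH_\Xi$; applying this to a minimizing sequence and using the compactness from step (1) to extract a limit (the limiting body is an intersection of finitely many slabs, automatically of the required form) gives an actual minimizer of the asserted shape.

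The main obstacle I expect is step (1): ensuring the minimizing sequence does not degenerate. A priori a sequence of volume-one convex bodies can become very long and thin; one must rule out that $\sum_i p_m^*(\xi_i) \to 0$ or that the bodies run off to infinity in a way that loses a limit. The spanning hypothesis on $\Xi$ is exactly what prevents $\sum_i p^*(\xi_i)$ from collapsing — if it did, the dual ball would be bounded in all of a spanning set of directions yet have large volume, contradicting a reverse isoperimetric-type bound (or simply: $p^*(\xi_i)$ small for all $i$ means $B_{p^*} \subseteq$ a small box relative to the basis extracted from the $\xi_i$, so $vol(p^*)$ small, so $vol(p)$ large, contradicting normalization). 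Once boundedness and non-degeneracy are in hand, the Blaschke selection theorem delivers a convergent subsequence and lower semicontinuity of $p \mapsto \sum_i p^*(\xi_i)$ (continuity, in fact, in the Hausdorff topology on the duals) closes the argument. A secondary technical point is handling the degenerate case where the minimizer assigns some $c_i=0$; but this only means the effective spanning set is a sub-family, and the same argument applies verbatim to it.
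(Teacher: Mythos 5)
Your proof is correct and follows essentially the same route as the paper: replace an arbitrary norm $p$ by the polyhedral norm whose unit ball is the intersection of the slabs $\{x: |\xi_i\cdot x|\le p^*(\xi_i)\}$ (equivalently, whose dual unit ball is the symmetric convex hull of the points $\pm\xi_i/p^*(\xi_i)$, which is how the paper phrases it), observe that this does not increase $\sum_i p^*(\xi_i)$ and does not decrease the volume, and conclude by a compactness argument over the resulting finite-parameter family. The only small slip is your claim that $\sum_i q^*(\xi_i)=\sum_i p^*(\xi_i)$ ``by construction'': in general one only gets $q^*(\xi_i)\le p^*(\xi_i)$ (a rescaled $\xi_i$ may lie in the interior of the convex hull of the others), but the inequality points in the favorable direction, so the argument is unaffected.
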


\begin{proof}
Let $p$ be a norm. Let us take $C$ the symmetric convex hull of $$\left\{\frac{\xi_i}{p^*(\xi_i)}\right\}_i.$$ Let $q^*$ be the Minkowski functional of $C$ and $q$ its dual norm.  Observe that $C\subset B^{p^*}(1)$ (the unit ball of $p^*$). By definition we have that $$q^*(\xi_i)\leq p^*(\xi_i)$$ for every $i$. Also we have that the unit ball of $q$ is $C^\circ$. And hence $B^p(1)=(B^{p^*}(1))^\circ \subset  C^\circ=B^q(1)$ and hence $$vol(p)\leq vol(q).$$ Hence we get that $$SH_\Xi(q)\leq SH_\Xi(p).$$ Since $C$ is a convex polyhedron the norm  $q$ has the  form
\begin{equation}\label{norms-preferred}\max_ic_i|\xi_i\cdot x|.
\end{equation}  Now by homogeneity  we may without loss of generality assume that  $\sum_ic_i=1$ and consider  the  function $SH_\Xi$ on the  unit simplex. Notice that $SH_\Xi$ is defined if  and only if  \eqref{norms-preferred} is a norm. This is true if all coefficients $c_1,\,\,i=1,\dots,n$ are positive, i.e in the interior of the simplex,   as well as for those  points on the boundary where the  vectors $\x_i$ with non-zero coefficients form a spanning set. The function  $SH_\Xi$ is continuous at all points of the simplex where it is defined and goes to infinity elsewhere.  Hence it reaches a positive  minimum. 
\end{proof}

Next we will compute how $SH_\Xi$ behaves under linear change of variable.
\begin{lem}\label{trans2}
Given an invertible linear map $L$ on $\Rk$, a norm $q$ on $\Rk$ and linear functionals $\Xi=(\xi_1,\dots, \xi_n)$  in $\Rk$, let $\eta_i=(L^t)^{-1}\xi_i,\,\,E=(\eta_1,\dots,\eta_n)$ and $L_*q(x)=\hat q(x)=q(Lx)$. Then $$SH_\Xi(L_*q)=SH_\Xi(\hat q)=SH_E(q)|\det L|^{1/k}.$$
\end{lem}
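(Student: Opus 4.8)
The plan is to unwind all the definitions and track how each ingredient of $SH_\Xi$ transforms under the substitution. Recall $SH_\Xi(p) = \frac{\sum_i p^*(\xi_i)}{vol(p)^{1/k}}$, so I need to understand three things under $p \mapsto L_*q = \hat q$ (with $\hat q(x) = q(Lx)$): the dual norm $\hat q^*$, its values on the $\xi_i$, and the volume $vol(\hat q)$ of its unit ball.

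First I would compute the dual norm. By definition $\hat q^*(u) = \max_{\hat q(x) \le 1} |u \cdot x| = \max_{q(Lx) \le 1} |u \cdot x|$. Substituting $y = Lx$, i.e. $x = L^{-1}y$, this becomes $\max_{q(y)\le 1} |u \cdot L^{-1}y| = \max_{q(y)\le 1} |(L^{-t}u)\cdot y| = q^*(L^{-t}u)$, where $L^{-t} = (L^t)^{-1}$. Hence $\hat q^*(\xi_i) = q^*((L^t)^{-1}\xi_i) = q^*(\eta_i)$, so the numerator $\sum_i \hat q^*(\xi_i) = \sum_i q^*(\eta_i)$, which is exactly the numerator of $SH_E(q)$.

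Next I would handle the volume. The unit ball of $\hat q$ is $B^{\hat q}(1) = \{x : q(Lx)\le 1\} = L^{-1}B^q(1)$, so $vol(\hat q) = vol(L^{-1}B^q(1)) = |\det L^{-1}|\, vol(q) = |\det L|^{-1} vol(q)$. Therefore $vol(\hat q)^{1/k} = |\det L|^{-1/k} vol(q)^{1/k}$. Putting the two computations together,
\[
SH_\Xi(\hat q) = \frac{\sum_i q^*(\eta_i)}{|\det L|^{-1/k} vol(q)^{1/k}} = |\det L|^{1/k}\,\frac{\sum_i q^*(\eta_i)}{vol(q)^{1/k}} = |\det L|^{1/k} SH_E(q),
\]
which is the claimed identity; the equality $SH_\Xi(L_*q) = SH_\Xi(\hat q)$ is just the notational convention $L_*q = \hat q$.

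There is no real obstacle here — every step is a routine change of variables — but the one point that warrants care is bookkeeping with transposes and inverses: one must be consistent about whether functionals act by $u \cdot x$ on the left or the right, and the appearance of $(L^t)^{-1}$ rather than $L$ in the definition of $\eta_i$ is precisely what makes the dual-norm computation come out cleanly, so I would double-check that the substitution $y = Lx$ in the $\max$ defining $\hat q^*$ produces $(L^t)^{-1}u$ and not $L^t u$.
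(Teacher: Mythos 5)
Your proof is correct and is essentially identical to the paper's own argument: the paper likewise observes that $\hat q^*(\xi)=q^*((L^t)^{-1}\xi)$ and that $L(B^{\hat q}(1))=B^q(1)$, hence $|\det L|\,vol(\hat q)=vol(q)$, and then combines the two. You have merely spelled out the change-of-variables details that the paper leaves implicit.
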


\begin{proof} Let $\hat q(x)=q(Lx).$ 
Then $\hat q^*(\xi)=q^*((L^t)^{-1}\xi)$ and $L(B^{\hat q}(1))=B^q(1).$ Hence $$\det(L)vol(\hat q)=vol(q).$$ \end{proof}

\subsection{Slow entropy for Cartan actions}\label{sbsSlowCartan} 
\begin{thm}\label{slow-entropy-bounds}
Let $\alpha$ be a Cartan action on the torus $\mathbb{T}^n$ then
\begin{equation}\label{slow-regulator} sh(\a)= C(n)R^{\frac{1}{n-1}}\end{equation}
 where $R=kR_K$ is as in (\ref{regulator-index}), i.e. $R_K$ is the regulator and $k=[U_K:\gamma(\a)]$, and 
 \begin{equation}\label{slow-entropy-estimate}\frac{n-1}{2}\le C(n)\le n-1.\end{equation} 
\end{thm}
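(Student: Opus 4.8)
The plan is to compute $sh(\a)$ by combining the linear formula \eqref{slow-linear} with the characterization of the optimal norm from the previous subsection, and then match the resulting minimization problem against the regulator computation already carried out in \eqref{Fried-regulator}. For a Cartan action the coarse Lyapunov splitting of $\Rn$ is into $n$ one-dimensional eigenlines, so \eqref{slow-linear} gives $sh(p,\a)=\sum_{j=1}^n p^*(\chi_j)=\sum_j \max_{p(t)\le 1}|\chi_j(t)|$, where the $\chi_j$ are the $n$ Lyapunov exponents of \eqref{Lyap}. By the Proposition on minimizers this is minimized (over norms of unit-ball volume one) by a norm of the form $q(x)=\max_j c_j|\chi_j(x)|$, and by homogeneity we can pass to minimizing the scale-invariant quantity $SH_{X}(p)=\bigl(\sum_j p^*(\chi_j)\bigr)/vol(p)^{1/(n-1)}$ with $X=(\chi_1,\dots,\chi_n)$, where $k=n-1$ here.

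Next I would normalize. Recall from the computation preceding Theorem~\ref{Fried-entropy-bound} that after the change of variables $y_j=\chi_j(t)-t_n$ the $\chi_j$ become (the transpose of) the regulator matrix with an extra column, and the relevant Jacobian is $nR$ with $R=kR_K$; equivalently, in the quotient $\Rnn \simeq \{\sum y_j=0\}$ the $n$ functionals $\chi_j$ are, up to a volume-scaling by a fixed factor depending only on $n$ and $R$, a fixed ``universal'' configuration (the restriction of the coordinate functionals $y_j$ to the hyperplane $\sum y_j=0$, which is a totally symmetric configuration). Using Lemma~\ref{trans2} with the linear map realizing this change of variables, $SH_X(p) = SH_{E}(q)\,|\det L|^{1/(n-1)}$ where $E$ is the universal configuration and $|\det L|^{1/(n-1)}$ is, up to a constant depending only on $n$, equal to $R^{1/(n-1)}$. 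This already yields \eqref{slow-regulator} with $C(n)$ an explicit universal constant: $C(n) = c_n \cdot \min_q SH_E(q)$, the minimum of $SH$ over the fixed symmetric configuration of $n$ coordinate functionals restricted to $\{\sum y_j = 0\}$ in $\Rnn$. The point is that $C(n)$ depends only on $n$, not on the number field.

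Finally I would estimate $C(n)$ to get \eqref{slow-entropy-estimate}. For the lower bound I would use \eqref{slow-Pesin}: for any norm $p$ of unit volume, $sh(p,\a)\ge 2\max_{p(t)\le 1} h(\a(t))$, and since the entropy function is a norm (case $(\mathbf P)$), a centrally symmetric convex body of volume $1$ contains a point where the entropy norm is at least a computable constant; tracking this constant against $R$ gives $sh(\a)\ge \tfrac{n-1}{2}R^{1/(n-1)}$. Alternatively, and more cleanly, the lower bound $C(n)\ge (n-1)/2$ should follow by comparing $\sum_j p^*(\chi_j)$ with $2\,p^*\bigl(\tfrac12\sum_j|\chi_j|\bigr)=2\,p^*(h^\a_\mu)$ — i.e. the slow-entropy integrand dominates twice the Fried/entropy-norm integrand pointwise — together with the fact (from \eqref{Fried-regulator}) that the entropy norm has unit ball of volume $\binom{2n-2}{n-1}/(R(n-1)!)$, and a volume-product (Minkowski / arithmetic–geometric) inequality relating $p^*(h^\a_\mu)/vol(p)^{1/(n-1)}$ to that volume. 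For the upper bound $C(n)\le n-1$ I would simply exhibit a good test norm: take $p = q$ proportional to the entropy norm $h^\a_\mu$ itself (rescaled to unit volume); then $\sum_j p^*(\chi_j) = \sum_j \max_{h^\a_\mu(t)\le c}|\chi_j(t)|$, and since each $|\chi_j| \le 2 h^\a_\mu$ pointwise, $p^*(\chi_j)\le 2c$, giving $\sum_j p^*(\chi_j) \le 2cn$, while the volume normalization contributes the $R^{1/(n-1)}$ factor and the binomial/factorial constants; a careful bookkeeping of these constants yields the factor $n-1$ rather than $n$. The main obstacle I anticipate is precisely this constant bookkeeping: extracting the clean universal bounds $\tfrac{n-1}{2}$ and $n-1$ from the volume formula for $\ell^1$-ball hyperplane sections \cite[Prop. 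II.7]{MP} and the binomial estimate \eqref{rough}, rather than something messier, and verifying that the minimizing norm in the Proposition on minimizers can indeed be compared to the entropy norm in both directions. The algebraic extraction of the $R^{1/(n-1)}$ factor via Lemma~\ref{trans2} is routine once the change of variables is set up as in the Fried computation.
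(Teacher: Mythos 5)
Your derivation of the structural formula \eqref{slow-regulator} is sound and is essentially the paper's argument: both reduce to polyhedral norms $\max_i c_i|\chi_i\cdot x|$ via the minimizer Proposition and then use Lemma~\ref{trans2} to transport $(\chi_1,\dots,\chi_n)$ to a configuration independent of the field, extracting $R^{1/(n-1)}$ as the Jacobian factor; whether the target configuration is your symmetric one (coordinate functionals restricted to $\{\sum y_j=0\}$) or the paper's $(e_1/c_1,\dots,e_{n-1}/c_{n-1},\eta)$ is immaterial, and the resulting minimization problem visibly depends only on $n$.

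The gap is in \eqref{slow-entropy-estimate}: both of your proposed routes to the constants provably miss them. For the lower bound, the chain $\sum_j p^*(\chi_j)\ge 2\max_{p(t)\le 1}h^\a_\mu(t)\ge 2\bigl(vol(p)/vol(B(h^\a_\mu))\bigr)^{1/(n-1)}$, combined with $vol(B(h^\a_\mu))=\binom{2n-2}{n-1}\frac{1}{R(n-1)!}$ from \eqref{Fried-regulator}, yields
\[
C(n)\ \ge\ 2\left(\frac{\bigl((n-1)!\bigr)^3}{(2n-2)!}\right)^{\frac{1}{n-1}},
\]
which is $2/\sqrt3\approx 1.15$ for $n=3$ (fine) but already $2(3/10)^{1/3}\approx 1.34<\tfrac32$ for $n=4$, and is asymptotically $\sim(n-1)/(2e)$ --- the right order, but strictly below $(n-1)/2$; the volume-comparison step is sharp (it is an equality when $B^p$ is a dilate of $B(h^\a_\mu)$), so no bookkeeping rescues the factor. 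For the upper bound the entropy norm is the wrong test norm: it gives $\sum_j p^*(\chi_j)\le 2n\,vol(B(h^\a_\mu))^{-1/(n-1)}$, i.e. $C(n)\le 2n\bigl(((n-1)!)^3/(2n-2)!\bigr)^{1/(n-1)}$, which is $6/\sqrt3\approx 3.46>2$ at $n=3$ and grows quadratically in $n$. The paper gets $n-1$ by testing instead with $q(x)=\max_i|\chi_i\cdot x|$ (all $c_i$ equal), whose unit ball after the change of variables is the cube of volume $2^{n-1}$ and for which the quantity evaluates to exactly $n-1$; and it gets $(n-1)/2$ by exploiting the polyhedral form of the minimizer: its unit ball is the cube intersected with a slab, hence has volume at most $2^{n-1}$, and the arithmetic--geometric mean inequality applied to $\sum_{i=1}^{n-1}1/c_i$ against $\bigl(\prod_{i=1}^{n-1}c_i\bigr)^{1/(n-1)}$ finishes the estimate. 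Those two structural inputs --- cube containment of the minimizing ball and AM--GM on the coefficients $c_i$ --- are what you need in place of the comparison with the entropy norm.
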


\begin{proof}
Let  $\chi_i$, $i=1,\dots, n$ be linear functionals in $\mathbb{R}^{n-1}$ corresponding to the Lyapunov exponents of a Cartan action $\alpha$. Hence $\sum_{i=1}^n\chi_i=0$ and $\chi_1,\dots\chi_{n-1}$ are linearly independent. 

Let $\hat q(x)=\max_ic_i|\chi_i\cdot x|$ for some $c_i\geq 0$. Notice that in order for $\hat q$ to be a norm at most one coefficient $c_i$  may vanish. Thus, without loss of generality, we may assume that $c_i>0$ for $i<n$. Now let $L$ be a linear transformation such that $(L^t)^{-1}(c_i\chi_i)=e_i$, $i=1,\dots, n-1$, where $e_i$ is the canonical basis in $\mathbb{R}^{n-1}$. Then we have by Lemma \ref{trans2} that 
$$SH_{\chi_1,\dots,\chi_n}(\hat q)=SH_{\frac{e_1}{c_1},,\dots, \frac{e_{n-1}}{c_{n-1}},\eta}(q)|\det L|^{1/k}$$
 where $\eta=(L^t)^{-1}\chi_n=-\sum_{i=1}^{n-1}\frac{e_i}{c_i}$ and 
\begin{eqnarray*}
q(x)&=&\hat q(L^{-1}x)=\max_ic_i|\chi_i\cdot L^{-1}x|=\max_ic_i|(L^t)^{-1}\chi_i\cdot x|\\&=&\max(\max_{1\leq i\leq n-1}c_i|\frac{e_i}c_i\cdot x|, c_n|(L^t)^{-1}\chi_n\cdot x|)\\&=&\max(\max_{1\leq i\leq n-1}|x_i|, c_n|\eta\cdot x|).
\end{eqnarray*}

Finally, 
\begin{equation}\label{minimizer-regulator}
\begin{aligned}
SH_{\chi_1,\dots,\chi_n}(\hat q)=&SH_{\frac{e_1}{c_1},,\dots, \frac{e_{n-1}}{c_{n-1}},\eta}(q)|\det L|^{\frac{1}{n-1}}\\
=&\frac{\sum_{i=1}^{n-1}\frac{q^*(e_i)}{c_i}+q^*(\eta)}{vol(q)^{\frac{1}{n-1}}}\left(\prod_{i=1}^{n-1}c_i\right)^{\frac{1}{n-1}}R^{\frac{1}{n-1}}
\end{aligned}
\end{equation}
 where $R=|\det(\chi_i)_{1\leq i\leq n-1}|$ as in (\ref{regulator-index}) is  the product of the regulator $R_K$ and $k=[U_K:\gamma(\a)]$.
 
 At this point we already see that values of $c_1,\dots,c_n$ for which  the minimum of $SH_{\chi_1,\dots,\chi_n}(\hat q)$ is reached  do not depend on $\chi_1,\dots,\chi_n$. Hence \eqref{minimizer-regulator} implies \eqref{slow-regulator}. 

We can assume without loss of generality that $c_n\leq c_i$ for every $i$, and let $\hat\eta=c_n\eta$. Then we have that $\hat\eta=\left(-\frac{c_n}{c_1},\dots,-\frac{c_n}{c_{n-1} }\right)$ and all its coordinares are of absolute value smaller than or equal to $1$.
Thus $$q(x)=\max(\max_{1\leq i\leq n-1}|x_i|), |\hat\eta\cdot x|$$ and we need to find upper and lower bounds for 
the minimum of
\[
\frac{\sum_{i=1}^{n-1}\frac{q^*(e_i)}{c_i}+\frac{q^*(\hat\eta)}{c_n}}{vol(q)^{\frac{1}{n-1}}}\left(\prod_{i=1}^{n-1}c_i\right)^{\frac{1}{n-1}}
\]
over $(c_1,\dots,c_n)$; more specifically, in order to prove \eqref{slow-entropy-estimate} we need to show that the minimum in question is  bound between $\frac{n-1}{2}$ and $n-1$.

\begin{lem}
$q^*(e_i)=1$ for  $i=1,\dots,n-1$.
\end{lem}

\begin{proof}
$$q^*(e_i)=\max_{q(x)\leq 1}|x_i|.$$ By the definition of $q(x)$ we get that if $q(x)\leq 1$ then $|x_i|\leq 1$; moreover, taking $x=e_i$ we get that $|x_i|=1$, $x_j=0$ for $j\neq i$ and $|\hat \eta\cdot x|=\frac{c_n}{c_i}\leq 1$. Hence $q(e_i)=1$ and $q^*(e_i)=1$. 
\end{proof}

\begin{lem}
$q^*(\hat \eta)=\min\{1, \sum_{i=1}^{n-1}\frac{c_n}{c_i}\}$. 
\end{lem}
\begin{proof}
Clearly $q^*(\hat\eta)\leq 1$. Let $\bar 1=(1,1,\dots, 1)$ be the vector with all coordinates equal to $1$, i.e. $\bar 1=\sum e_i$.  Then $|\hat\eta\cdot\bar 1|=\sum_{i=1}^{n-1}\frac{c_n}{c_i}$, hence $q(\bar 1)=\max\{1,|\hat\eta\cdot\bar 1|\}$. So, if $q(\bar 1)=1$ then $1\geq|\eta\cdot\bar 1|$  and  the claim follows. If, on the other hand, $q(\bar 1)=|\eta\cdot\bar 1|$, then $|\eta\cdot\bar 1|\geq 1$. 
\end{proof}

So there are two cases:

{\bf Case I:} $q^*(\hat\eta)=\sum_{i=1}^{n-1}\frac{c_n}{c_i}$.

Here  $|\hat\eta\cdot\bar 1|\leq 1$. Hence, if $|x_i|\leq 1$ for any $i$ then $|\hat\eta\cdot x|\leq|\hat\eta\cdot\bar 1|\leq 1$ and consequently  $q$ is the $l^{\infty}$ norm, 
$$q(x)=\max_i|x_i|.$$ 
In particular,  $vol(q)=2^{n-1}$, hence 
 $$\frac{\sum_{i=1}^{n-1}\frac{q^*(e_i)}{c_i}+\frac{q^*(\hat\eta)}{c_n}}{vol(q)^{1/k}}\left(\prod_{i=1}^{n-1}c_i\right)^{\frac{1}{n-1}}=\sum_{i=1}^{n-1}\frac{1}{c_i}\left(\prod_{i=1}^{n-1}c_i\right)^{\frac{1}{n-1}}.$$

The right hand side is always larger than or equal to $(n-1)$ and it minimizes precisely when $c_i=c$ for every $i=1,\dots, n-1$ and the value of $c_n$ is arbitrary as long as $c_n\leq \frac{c}{n-1}$.  This gives the above estimate in \eqref{slow-entropy-estimate}  and hence upper bound for $sh(\alpha)$.

{\bf Case II:} $q^*(\hat\eta)=1$

Here  $|\hat\eta\cdot\bar 1|\geq 1$. So the unit ball for $q$ is the unit cube intersected with the slice $|\hat\eta\cdot x|\leq 1$ and 
$$\frac{\sum_{i=1}^{n-1}\frac{q^*(e_i)}{c_i}+\frac{q^*(\hat\eta)}{c_n}}{vol(q)^{\frac{1}{n-1}}}\left(\prod_{i=1}^{n-1}c_i\right)^{\frac{1}{n-1}}=\frac{\sum_{i=1}^{n}\frac{1}{c_i}}{vol(q)^{\frac{1}{n-1}}}\left(\prod_{i=1}^{n-1}c_i\right)^{\frac{1}{n-1}}.$$

Now we have that $$\frac{\sum_{i=1}^{n}\frac{1}{c_i}}{vol(q)^{\frac{1}{n-1}}}\left(\prod_{i=1}^{n-1}c_i\right)^{\frac{1}{n-1}}\geq\frac{\sum_{i=1}^{n-1}\frac{1}{c_i}}{vol(q)^{\frac{1}{n-1}}}\left(\prod_{i=1}^{n-1}c_i\right)^{\frac{1}{n-1}}\geq \frac{n-1}{vol(q)^{\frac{1}{n-1}}}\geq \frac{n-1}{2}.$$
The last inequality follows since the unit ball of $q$ is inside the unit cube. This gives the below estimate in \eqref{slow-entropy-estimate} and hence the lower bound for $sh(\alpha)$.
\end{proof}

In light of good upper and lower bounds \eqref{slow-entropy-estimate} for the value of slow entropy it looks like  finding the actual minimizer and hence exact value of the constant $C(n)$ in Theorem~\ref{slow-entropy-bounds} will not much  improve our understanding  of the slow entropy and its connection with the Fried average entropy. Still  it would be nice to know the minimizer and  exact value of the constant $C(n)$ for aesthetic  reasons. Evidence from the low-dimensional cases as well as symmetry arguments support the assertion that the minimizer appears for $c_1=\dots=c_n$.  

\begin{conj}The minimum in \eqref{minimizer-regulator}  is attained at the norm  $q(x)=\max_i|\chi_i\cdot x|$. 
\end{conj}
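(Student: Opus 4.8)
The plan is to prove that the minimizer of $SH_{\chi_1,\dots,\chi_n}(\hat q)$ in \eqref{minimizer-regulator} occurs at $c_1=\dots=c_n$, equivalently (after the linear change of variable used in the proof of Theorem~\ref{slow-entropy-bounds}) that the function
\[
\Phi(c_1,\dots,c_n)=\frac{\sum_{i=1}^{n-1}\frac{q^*(e_i)}{c_i}+\frac{q^*(\hat\eta)}{c_n}}{vol(q)^{\frac{1}{n-1}}}\left(\prod_{i=1}^{n-1}c_i\right)^{\frac{1}{n-1}}
\]
attains its minimum over $(c_1,\dots,c_n)$ with all $c_i$ equal. By the Case~I/Case~II dichotomy in the proof of Theorem~\ref{slow-entropy-bounds} one already knows that in Case~I ($|\hat\eta\cdot\bar 1|\le 1$) the minimum over that region equals $n-1$ and is attained along $c_1=\dots=c_{n-1}=:c$ with $c_n\le c/(n-1)$; the boundary between the two cases is exactly $c_n=c/(n-1)$ when all the first $n-1$ coordinates are equal. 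So the real content is to show that inside Case~II ($|\hat\eta\cdot\bar 1|\ge 1$), where $vol(q)<2^{n-1}$, one cannot do better than the value $n-1$, and that equality forces $c_1=\dots=c_n$.

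First I would exploit the scaling invariance of $\Phi$ to normalize, say, $\prod_{i=1}^{n-1}c_i=1$, turning the problem into minimizing $\bigl(\sum_{i=1}^{n}\tfrac1{c_i}\bigr)/vol(q)^{1/(n-1)}$ subject to $\prod_{i=1}^{n-1}c_i=1$, with $q(x)=\max\bigl(\max_{1\le i\le n-1}|x_i|,\,|\hat\eta\cdot x|\bigr)$ and $\hat\eta=(-c_n/c_1,\dots,-c_n/c_{n-1})$. The key analytic input is an explicit (or explicit enough) formula, or at least a sharp lower bound, for $vol(q)$: the unit ball of $q$ is the cube $[-1,1]^{n-1}$ with the two slabs $|\hat\eta\cdot x|\le 1$ removed, so $vol(q)=2^{n-1}-2\,vol\{x\in[-1,1]^{n-1}:\hat\eta\cdot x>1\}$. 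One can bound the volume of the cut-off corner region from above; the point is that when $|\hat\eta\cdot\bar 1|$ is close to $1$ the removed volume is small and $vol(q)$ is close to $2^{n-1}$, while the numerator $\sum 1/c_i$ is controlled by AM--GM. I would then set up the Lagrange/KKR conditions for the normalized problem and argue, using the symmetry of $\Phi$ under permutations of $c_1,\dots,c_{n-1}$ together with a convexity or Schur-convexity property of $c\mapsto\sum 1/c_i$ and a monotonicity property of $vol(q)$ under symmetrization of the coordinates of $\hat\eta$, that any critical point can be symmetrized without increasing $\Phi$; combined with the one-dimensional analysis of the symmetric slice $c_1=\dots=c_{n-1}=c$, $c_n=t$, this pins the minimum at $c=t$.

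The main obstacle I expect is controlling $vol(q)$ precisely enough in Case~II: the volume of a cube minus a pair of hyperplane slabs with a general normal vector does not have a clean closed form for all $n$ and all positions of $\hat\eta$, so one needs either a Brunn--Minkowski/symmetrization argument showing that, for fixed $\sum c_n/c_i=|\hat\eta\cdot\bar1|$, the volume $vol(q)$ is maximized (hence the ratio best) when all the ratios $c_n/c_i$ are equal, or a direct monotonicity estimate of the corner volume as a Schur-convex function of $(c_n/c_1,\dots,c_n/c_{n-1})$. A clean way to do this is to observe that the cut region $\{x\in[-1,1]^{n-1}:\hat\eta\cdot x>1\}$ is a simplex-like polytope whose volume, as a function of the $w_i:=c_n/c_i\in[0,1]$ with $\sum w_i$ fixed, is Schur-convex, so it is smallest, and $vol(q)$ largest, at $w_1=\dots=w_{n-1}$; feeding this back and optimizing the remaining single parameter then yields the minimum at $c_1=\dots=c_n$. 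Once $vol(q)$ is handled the rest is elementary calculus with AM--GM, and the equality case $c_1=\dots=c_n$ drops out of the strictness in Schur-convexity and AM--GM.
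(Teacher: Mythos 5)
This statement is an open conjecture in the paper --- the authors offer no proof, only the remark that ``evidence from the low-dimensional cases as well as symmetry arguments'' supports it --- so there is no argument of theirs to compare yours against; the only question is whether your proposal closes the gap, and it does not. Beyond the fact that it is a plan rather than a proof (the Schur-convexity of the corner volume, the Lagrange analysis, and the final one-parameter optimization are all announced but never carried out), there is a concrete logical error in your identification of what must be shown. You assert that ``the real content is to show that inside Case~II \dots one cannot do better than the value $n-1$, and that equality forces $c_1=\dots=c_n$.'' But at the conjectured minimizer $c_1=\dots=c_n$ one is in Case~II with $\hat\eta=(-1,\dots,-1)$, the numerator equals $n/c$, and the value of the functional is $n/\mathrm{vol}(q)^{1/(n-1)}$, which is \emph{strictly less} than $n-1$ in general: for $n=3$ the unit ball of $q(x)=\max(|x_1|,|x_2|,|x_1+x_2|)$ has area $3$, giving the value $3/\sqrt{3}=\sqrt{3}\approx 1.732<2$. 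So the conjecture asserts precisely that Case~II \emph{does} beat the Case~I value $n-1$, and that the optimum sits at the fully symmetric point; a proof that Case~II cannot go below $n-1$ would in fact refute the conjecture rather than establish it.

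Even after correcting the target, the two essential difficulties remain untouched. First, your claimed Schur-convexity of $w\mapsto \mathrm{vol}\{x\in[-1,1]^{n-1}:w\cdot x>1\}$ for fixed $\sum_i w_i$ is exactly the kind of cube-slab volume comparison (in the spirit of Hensley and Ball) that is delicate and direction-dependent; it is not obviously true and is the heart of the matter, not a routine estimate. Second, even granting symmetrization to the slice $c_1=\dots=c_{n-1}=c$, $c_n=t$, you still must show that the one-variable function $t\mapsto \bigl((n-1)/c+1/t\bigr)\,c\,/\,\mathrm{vol}(q_t)^{1/(n-1)}$ on $c/(n-1)\le t\le c$ is minimized at $t=c$, which requires controlling how $\mathrm{vol}(q_t)$ decreases as the slab tightens against how the numerator decreases; this trade-off is not addressed. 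As it stands, the proposal neither proves the conjecture nor correctly states what a proof would have to establish.
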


Similarly to  the derivation of Corollary~\ref{CorollaryMain}, we obtain from Theorem A,  formula \eqref{Fried-regulator}, Theorem~\ref{Fried-entropy-bound}  and Theorem~\ref{slow-entropy-bounds}:

\begin{cor}\label{Corollary-Main2} The slow  $sh(\a)$ entropy of a weakly mixing maximal rank action $\a$  of $\Zk,\,k\ge 2$,
is expressed through its Fried average entropy $h^*(\a)$ as follows
$$sh(\a)=c(k)\binom{2k}{k}^{\frac{1}{k}}(h^*(\a))^{\frac{1}{k}},$$
where $k/4\le c(k)\le k/2$. In particular, the slow entropy is either equal to zero or uniformly bounded away from zero; the lower bound grows linearly with the rank. 
\end{cor}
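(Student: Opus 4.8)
The plan is to combine the two structural theorems already in hand — the measure rigidity Theorem~A and the explicit formulas for both invariants on Cartan actions — and simply eliminate $R$ between them. First, by Theorem~A a weakly mixing maximal rank action $\a$ of $\Zk$ with $k=n-1\ge 2$ on an $n$-dimensional manifold is measurably isomorphic (via $h_1$, since $m=1$ in the weakly mixing case) to a Cartan action $\ao$ by affine maps on $\Tn$ or on the infratorus $\Tn/\{\pm\Id\}$, up to passing to a finite index subgroup $\Gamma\subset\Znn$. Passing to a finite index subgroup does not affect the statement we want: by Proposition~\ref{Fried-properties}(\ref{entropy-index}) it multiplies $h^*(\a)$ by the index, and one checks from \eqref{slow-linear} (the slow entropy of an affine torus action is expressed purely through the Lyapunov functionals, which scale linearly under restriction to a sublattice) that $sh$ scales by the $k$-th root of that same index; so the relation we are proving is invariant under this operation. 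Likewise, taking the finite factor $\Tn/\{\pm\Id\}$ changes neither the entropy function nor the Lyapunov data, hence neither invariant. Thus it suffices to prove the displayed formula for an honest Cartan action $\a$ on $\Tn$.

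For such an action, \eqref{Fried-regulator} gives $h^*(\a)=\dfrac{R\,2^{n-1}}{\binom{2n-2}{n-1}}$ with $R=kR_K$, and Theorem~\ref{slow-entropy-bounds} gives $sh(\a)=C(n)R^{1/(n-1)}$ with $\dfrac{n-1}{2}\le C(n)\le n-1$. Solving the first relation for $R$ yields $R=\dfrac{\binom{2n-2}{n-1}}{2^{n-1}}\,h^*(\a)$, and substituting into the second gives
\[
sh(\a)=C(n)\left(\frac{\binom{2n-2}{n-1}}{2^{n-1}}\right)^{\frac{1}{n-1}}\bigl(h^*(\a)\bigr)^{\frac{1}{n-1}}.
\]
Now write $k=n-1$. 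Then $\binom{2n-2}{n-1}=\binom{2k}{k}$ and $2^{n-1}=2^k$, so the prefactor is $C(k+1)\,\binom{2k}{k}^{1/k}/2$. Setting $c(k):=C(k+1)/2$ we get exactly $sh(\a)=c(k)\binom{2k}{k}^{1/k}(h^*(\a))^{1/k}$, and the bound $\tfrac{n-1}{2}\le C(n)\le n-1$ becomes $\tfrac{k}{4}\le c(k)\le \tfrac{k}{2}$, as claimed.

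Finally, for the last sentence of the corollary: if $h^*(\a)=0$ then the formula forces $sh(\a)=0$ (and conversely, consistent with the earlier proposition that both vanish exactly in case $(\mathbf{O})$); if $h^*(\a)>0$ then by Theorem~\ref{Fried-entropy-bound} we have $h^*(\a)\ge 0.089$, so
\[
sh(\a)=c(k)\binom{2k}{k}^{1/k}(h^*(\a))^{1/k}\ge \frac{k}{4}\binom{2k}{k}^{1/k}(0.089)^{1/k}.
\]
Using $\binom{2k}{k}\ge 4^k/(2k)$ (the crude bound \eqref{rough} with $k=n-1$), the quantity $\binom{2k}{k}^{1/k}$ is bounded below by a constant (it tends to $4$), while $(0.089)^{1/k}\to 1$; hence the right-hand side grows linearly in $k$, which gives the linear lower bound and in particular shows $sh(\a)$ is uniformly bounded away from zero. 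The only mild obstacle is the bookkeeping in the first paragraph — verifying that both invariants transform compatibly under passage to a finite index subgroup and to the infratorus factor — but this is immediate from the formulas \eqref{Fried-regulator} and \eqref{slow-linear} and Proposition~\ref{Fried-properties}; everything else is algebraic substitution.
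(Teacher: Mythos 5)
Your proposal is correct and follows exactly the route the paper intends: the paper's ``proof'' is a one-line citation of Theorem~A, formula \eqref{Fried-regulator}, Theorem~\ref{Fried-entropy-bound} and Theorem~\ref{slow-entropy-bounds}, and your elimination of $R$ between \eqref{Fried-regulator} and \eqref{slow-regulator}, with $k=n-1$ and $c(k)=C(k+1)/2$, is precisely the intended algebra. The bookkeeping for weak mixing ($m=1$) and finite factors matches the paragraph the paper uses to derive Corollary~\ref{CorollaryMain}.
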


A slight variation appears in the case of ergodic but not weakly mixing actions because of the normalization that is fully determined by the periodic part of the action but may not be uniform.

\end{document}